\numberwithin{equation}{section}
\newtheorem{thm}{Theorem}[section]
\newtheorem{lem}[thm]{Lemma}
\newtheorem{Def}[thm]{Definition}
\theoremstyle{definition}
\newtheorem{Ass}[thm]{Assumption}
\DeclareMathOperator*{\esssup}{ess\,sup}
\DeclareMathOperator{\DIV}{div}
\newcommand{\BL}{\text{BL}}
\newcommand{\KL}{\text{KL}}
\newcommand{\Lip}{\text{Lip}}
\newcommand{\R}{\mathbb{R}}
\newcommand{\Rd}{\mathbb{R}^{d}}
\newcommand{\eps}{\varepsilon}
\newcommand{\diff}{\mathop{}\!\mathrm{d}}
\newcommand{\doublewidetilde}[1]{{%
  \mathpalette\double@widetilde{#1}%
}}
\newcommand{\double@widetilde}[2]{%
  \sbox\z@{$\m@th#1\widetilde{#2}$}%
  \ht\z@=.9\ht\z@
  \widetilde{\box\z@}%
}
\author{José A. Carrillo}
\address{{\it José A. Carrillo:} Mathematical Institute, University of Oxford, Woodstock Road, Oxford, OX2 6GG, United Kingdom}
\email{carrillo@maths.ox.ac.uk}
\author{Jakub Skrzeczkowski}
\address{{\it Jakub Skrzeczkowski: } Mathematical Institute, University of Oxford, Woodstock Road, Oxford, OX2 6GG, United Kingdom}
\email{jakub.skrzeczkowski@maths.ox.ac.uk}
\begin{document}

\title[]{Convergence and stability results for the particle system in the Stein gradient descent method}

\begin{abstract}
There has been recently a lot of interest in the analysis of the Stein gradient descent method, a deterministic sampling algorithm. It is based on a particle system moving along the gradient flow of the Kullback-Leibler divergence towards the asymptotic state corresponding to the desired distribution. Mathematically, the method can be formulated as a joint limit of time $t$ and number of particles $N$ going to infinity. We first observe that the recent work of Lu, Lu and Nolen (2019) implies that if $t \approx \log \log N$, then the joint limit can be rigorously justified in the Wasserstein distance. Not satisfied with this time scale, we explore what happens for larger times by investigating the stability of the method: if the particles are initially close to the asymptotic state (with distance $\approx 1/N$), how long will they remain close? We prove that this happens in algebraic time scales $t \approx \sqrt{N}$ which is significantly better. The exploited method, developed by Caglioti and Rousset for the Vlasov equation, is based on finding a functional invariant for the linearized equation. This allows to eliminate linear terms and arrive at an improved Gr{\"o}nwall-type estimate.  
\end{abstract}

\keywords{Stein variational gradient descent; Particle system; Mean field limit; Sampling; Bayesian inference; Stability analysis}

\subjclass{35Q62, 35B35, 35Q68, 62-08, 65K10}

\maketitle
\setcounter{tocdepth}{1}

\section{Introduction}

The Stein gradient descent method is a recently extensively studied algorithm \cite{MR4622922,han2018stein, liu2018stein,
wang2019stein, zhuo2018message,
chen2020projected,MR4582478,liu2016stein,liu2017stein,korba2020non,MR3919409} to sample the probability distribution $\rho_{\infty}:=e^{-V(x)}/Z$ when the normalization constant $Z = \int_{\Rd} e^{-V(x)} \diff x$ is unknown or difficult to compute. A prominent example is the Bayesian inference \cite{MR2652785} used to fit parameters $\theta \in \Theta$ based on the data $D$ and a priori distribution of parameters $\pi(\theta)$: the a posteriori distribution is given by
\begin{equation}\label{eq:bayes_formula}
\mathbb{P}(\theta|D) = \frac{\mathbb{P}(D|\theta) \, \pi(\theta)} {\int_{\Theta} \mathbb{P}(D|\theta') \, \pi(\theta') \diff \theta'} 
\end{equation}
Comparing to the well-known stochastic Metropolis-Hastings algorithm and its variants \cite{MR3363437,metropolis1953equation,MR3020951,MR1399158, MR1440273} which require huge number of iterations, the Stein algorithm is completely deterministic. In this method, one starts with a measure $\mu$ and modifies it via the map
$$
T_{\varepsilon,\phi}(x) = x +\varepsilon \, \phi,
$$
where $\varepsilon$ is a small parameter and $\phi$ is chosen to minimize the Kullback-Leibler divergence $\KL(T^{\#}_{\varepsilon, \phi} \, \mu || \rho_{\infty})$ where for two nonnegative measures $\mu$, $\nu$ the Kullback-Leibler divergence is defined as
\begin{equation}\label{eq:KL_div}
\KL(\mu || \nu) = \begin{cases}
\displaystyle\int_{\Rd} \log\left(\frac{\diff \mu}{\diff \nu}(x) \right) \, \frac{\diff \mu}{\diff \nu}(x)  \, \diff \nu(x)  &\mbox{ if } \frac{\diff \mu}{\diff \nu} \mbox{ exists,}\\
+\infty &\mbox{ otherwise, }
\end{cases} 
\end{equation}
and $T^{\#}_{\varepsilon, \phi}\mu$ is a measure which is the push-forward of $\mu$ along the map $T_{\varepsilon, \phi}$:
$$
T^{\#}_{\varepsilon, \phi}\mu(A) = \mu(T_{\varepsilon, \phi}^{-1}(A)) \qquad \qquad \mbox{ for any Borel set } A.
$$
The unique minimizer of the Kullback-Leibler divergence corresponds to the desired distribution $\rho_{\infty}$. The reason for choosing this functional is that its first variation does not depend on the normalization constant $Z$ (furthermore, this is the only functional with such a property, see \cite[Proposition 2.1]{chen2023gradient}). More precisely, $\phi$ is chosen as a maximizer of the following optimization problem
$$
\max_{\phi \in \mathcal{H}} \left\{ -\frac{\diff}{\diff \eps}\KL(T^{\#}_{\varepsilon, \phi} \, \mu || \rho_{\infty})|_{\varepsilon=0} : \|\phi\|_{\mathcal{H}} \leq 1 \right\}
$$
where $\mathcal{H}$ is a sufficiently big Hilbert space. A simple computation (see \cite{liu2016stein,liu2017stein}) shows that
$$
-\frac{\diff}{\diff \eps}\KL(T^{\#}_{\varepsilon, \phi} \, \mu || \rho_{\infty})|_{\varepsilon=0}  = \int_{\R^d} \left( \nabla \log(\rho_{\infty}) \cdot \phi + \DIV \phi \right) \diff \mu(x) 
$$
so that we see that the variation does not depend on the normalization constant $Z$. In the particular case that $\mathcal{H}$ is a reproducing Hilbert space with kernel $K(x-y)$, one can obtain an explicit expression for the optimal $\phi$ (up to a normalization constant):
$$
\phi \propto (\nabla \log(\rho_{\infty}) \mu )\ast K - \nabla K \ast \mu, 
$$
where $\ast$ denotes convolution operator $f\ast g(x) = \int_{\R^d} f(x-y)\,g(y) \diff y$. In particular, if $\mu$ has a particle representation, this motivates (formally) an iterative algorithm: we set $\mu_0 = \frac{1}{N}\sum_{i=1}^N \delta_{x_0^i}$ and given $\mu_l =\frac{1}{N}\sum_{i=1}^N \delta_{x_l^i}$ from the $l$-th step, in the $(l+1)$-th step we compute $\mu_{l+1} = \frac{1}{N}\sum_{i=1}^N \delta_{x_{l+1}^i}$ by
\begin{equation}\label{eq:discrete_algo_xi}
x^i_{l+1} = x_l^i + \frac{\varepsilon}{N}\sum_{j=1}^N \left[ \nabla \log \rho_{\infty}(x^j_l)\, K(x^i_l - x^j_l) - \nabla K(x^i_l - x^j_l)  \right] 
\end{equation}
(see \cite{liu2016stein,liu2017stein} for more details). This shows that the Stein gradient descent method is simple and attractive for practioners.\\

From the analytical point of view, moving from discrete distributions to the continuous ones (i.e. sending $N\to\infty$) is a delicate matter. Indeed, the Kullback-Leibler divergence \eqref{eq:KL_div} is not well-defined for the discrete distributions. However, its first variation is which makes the algorithm \eqref{eq:discrete_algo_xi} well-defined. In \cite{MR3919409}, the Stein's method was connected to the ODE system 
\begin{equation}\label{eq:ODE_mean_field_limit}
\partial_t x^i(t) = -\frac{1}{N}\sum_{j=1}^N \nabla K(x^i(t)-x^j(t))  - \frac{1}{N}\sum_{j=1}^N  K(x^i(t)-x^j(t)) \, \nabla V(x^j(t)).
\end{equation}
We note that the algorithm \eqref{eq:discrete_algo_xi} is in fact the time discretization of the ODE \eqref{eq:ODE_mean_field_limit} with the time step $\varepsilon$. Considering the empirical measure $\rho^N_t= \frac{1}{N} \sum_{i=1}^N \delta_{x_i(t)}$, it was proved in \cite{MR3919409}, that on finite intervals of time, $\rho^N_t \to \rho_t$ in the Wasserstein distance $\mathcal{W}_p$ where $\rho_t$ solves the nonlocal PDE
\begin{equation}\label{eq:nonlocal_PDE_Stein}
\partial_t \rho_t = \DIV(\rho_t \, K\ast(\nabla \rho_t + \nabla V \ \rho_t)).
\end{equation}
More rigorously, by using Dobrushin-type argument, the authors in \cite{MR3919409} established the following stability inequality
\begin{equation}\label{eq:stability_est_Wasserstein}
\mathcal{W}_p(\mu_t, \nu_t) \leq C\exp(C\exp(CT)) \mathcal{W}_p(\mu_0, \nu_0)
\end{equation}
for all times $t \in [0,T]$ and measure solutions $\mu_t$, $\nu_t$ to \eqref{eq:nonlocal_PDE_Stein}, assuming that $V(x) \approx |x|^p$ for large $x$ (see \cite{MR3919409} for more general setting). Having sent $N \to \infty$, one can obtain $\rho_{\infty}$ as the unique stationary solution of \eqref{eq:nonlocal_PDE_Stein} by sending $t\to \infty$.\\

\subsection{Main results} The paper \cite{MR3919409} recasts the Stein method as a limit $N \to \infty$ and then $t \to \infty$. Yet, practical computations involve discretization in space and so, they correspond in fact to the joint limit $N \to \infty$, $t\to \infty$. We first state a result showing that in a certain scaling between $N$ and $t$, one can rigorously justify the joint limit in the Wasserstein distance $\mathcal{W}_q$. This applies to the potentials having growth $|x|^p$ for large $x$.

\begin{thm}[convergence]\label{thm:logarithmic_estimates_Stein}
Suppose that $K$, $V$ satisfy Assumptions \ref{ass:general_for_both} and \ref{ass:theorem_lulunolen}. Let $\rho^N_t = \frac{1}{N} \sum_{i=1}^N \delta_{x_i(t)}$ where $x_i(t)$ solve \eqref{eq:ODE_mean_field_limit}. Let $N(t)= \exp(2\,C \exp(Ct))$ where $C$ is the constant as in \eqref{eq:stability_est_Wasserstein}. Then, for all $q \in [1,p)$
$$
\mathcal{W}_q(\rho_t^{N(t)}, \rho_{\infty}) \to 0 \mbox{ as } t\to \infty.
$$
\end{thm}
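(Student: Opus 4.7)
The plan is to decompose via the triangle inequality, using the stability estimate \eqref{eq:stability_est_Wasserstein} on one piece and the long-time behavior of the continuous PDE \eqref{eq:nonlocal_PDE_Stein} on the other. Fix a smooth reference initial datum $\rho_0$ (compactly supported, with enough moments), let $\rho_t$ denote the corresponding solution of \eqref{eq:nonlocal_PDE_Stein}, and choose the particle initialization $\{x_i(0)\}_{i=1}^N$ to be a deterministic quantization of $\rho_0$. Then
\[
\mathcal{W}_q\bigl(\rho^{N(t)}_t, \rho_\infty\bigr) \;\leq\; \mathcal{W}_q\bigl(\rho^{N(t)}_t, \rho_t\bigr) \;+\; \mathcal{W}_q(\rho_t, \rho_\infty),
\]
and it suffices to send each term to zero as $t \to \infty$.

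The second term is handled by the Lyapunov/gradient-flow structure exploited in \cite{MR3919409}: $\rho_\infty$ is the unique stationary solution of \eqref{eq:nonlocal_PDE_Stein} and $\KL(\cdot\|\rho_\infty)$ decreases along the flow, so standard arguments yield $\mathcal{W}_q(\rho_t, \rho_\infty) \to 0$, with the restriction $q < p$ entering through the control of $p$-moments guaranteed by the polynomial growth $V(x)\approx|x|^p$ in Assumption \ref{ass:theorem_lulunolen}.

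For the first term, both $\rho^{N(t)}_t$ and $\rho_t$ are measure solutions of the nonlocal PDE \eqref{eq:nonlocal_PDE_Stein}, so the Dobrushin-type stability estimate \eqref{eq:stability_est_Wasserstein} applies and yields
\[
\mathcal{W}_q\bigl(\rho^{N(t)}_t, \rho_t\bigr) \;\leq\; C\exp\bigl(C\exp(Ct)\bigr)\,\mathcal{W}_q\bigl(\rho^{N(t)}_0, \rho_0\bigr).
\]
With the choice $N(t) = \exp\bigl(2C\exp(Ct)\bigr)$ one has $\exp(C\exp(Ct)) = N(t)^{1/2}$, so this piece tends to zero provided the initial approximation satisfies $\mathcal{W}_q(\rho^N_0, \rho_0) = o(N^{-1/2})$. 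For sufficiently regular compactly supported $\rho_0$ this can be arranged by an explicit deterministic construction (quantile-based quantization in dimension one, Zador-type constructions in higher dimension, with the prefactor $2$ in $N(t)$ replaced, if necessary, by a larger dimension-dependent constant).

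The main obstacle is conceptual rather than technical: the stability constant in \eqref{eq:stability_est_Wasserstein} grows doubly-exponentially in $t$, so the initial particle approximation error has to decay faster than $\exp(-C\exp(Ct))$ in order for the Gr\"onwall-type bound to close. This forces $N$ to grow as $\exp(\exp(Ct))$, equivalently $t\approx \log\log N$, which is precisely the unsatisfactory time scale that motivates the stability analysis in the rest of the paper. No additional estimates are needed beyond \eqref{eq:stability_est_Wasserstein} and a quantitative empirical-measure approximation of $\rho_0$; the result is essentially a bookkeeping exercise reading off the best joint scaling compatible with the existing stability bound.
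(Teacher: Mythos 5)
Your proof follows essentially the same approach as the paper: you use the identical triangle-inequality decomposition, handle the particle-approximation term via the Dobrushin-type stability estimate \eqref{eq:stability_est_Wasserstein} with the scaling $N(t)=\exp(2C\exp(Ct))$, and handle the $\mathcal{W}_q(\rho_t,\rho_\infty)\to 0$ term via the KL gradient-flow structure together with $p$-moment bounds (this last step is exactly what the paper formalizes as Lemma \ref{lem:convergence_rho_t_Wasserstein}, upgrading narrow convergence to $\mathcal{W}_q$ convergence by a truncation argument using the uniform entropy and moment bound \eqref{eq:bounds_solution_Xp}). The only substantive point where you are slightly more cautious than the paper is that you flag explicitly that the initial particle configuration must achieve $\mathcal{W}_p(\rho^N_0,\rho_0)\lesssim N^{-1/2}$, whereas the paper implicitly assumes $\mathcal{W}_p(\rho^N_0,\rho_0)\leq 1/N$ (absorbed into \eqref{eq:stability_estimate_Dobrushin}), but this does not change the structure of the argument.
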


We see that the number of particles is unpractically large comparing to the time. To understand what happens for longer time scales, we address the question of stability of the particle system \eqref{eq:ODE_mean_field_limit}. Assuming that the initial configuration of particles $\rho_0^N$ is close to the asymptotic state (say, with error $\approx \frac{1}{N}$), we ask for how long it remains close. For example, the estimate \eqref{eq:stability_est_Wasserstein} suggests that after time $t \approx \log \log N$, the distance $\mathcal{W}_p(\rho^N_t, \rho_{\infty})$ is of order~1. Our main result improves this estimate and states that this time is of an algebraic order with respect to $N$ rather than just logarithmic.
\begin{thm}[stability estimate]\label{thm:main1}
Suppose that $K$, $V$ satisfy Assumptions \ref{ass:general_for_both} and \ref{ass:potential_kernel}. Let $\rho_t$ be a measure solution to \eqref{eq:nonlocal_PDE_Stein}. Then, there exists a constant $C$ depending only on $K$ and $V$ such that for all times $t$ satisfying $1-C\,t\,(t+1)\,\| \rho_0 - \rho_{\infty}\|_{\BL^*_V}>0$ we have
\begin{equation}\label{eq:stability_estimate_main_thm}
\| \rho_t - \rho_{\infty}\|_{\BL^*_V} \leq \frac{C\,(t+1) \, \| \rho_0 - \rho_{\infty}\|_{\BL^*_V}}{1-C\,(t+1)\,t\,\| \rho_0 - \rho_{\infty}\|_{\BL^*_V}}
\end{equation}
where the norm $\| \cdot \|_{\BL^*_V}$ is defined in \eqref{eq:weighted_norm}.

\end{thm}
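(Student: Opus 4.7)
The proof goes via duality on the linearized equation, following the Caglioti--Rousset scheme. I split the evolution into a linear part whose adjoint flow I can control by a \emph{polynomial} (in $t$) amplification factor via a functional invariant, and a genuinely quadratic remainder. This already yields a Riccati-type integral inequality of the form $y(t) \lesssim (t+1) y_0 + \int_0^t (t-s+1)\, y(s)^2 \diff s$, from which the claimed fractional bound follows by a bootstrap, with blow-up time $\sim 1/\sqrt{y_0}$—precisely the $\sqrt{N}$ scaling advertised.

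\textbf{Step 1: Linearization at $\rho_\infty$.} Write $\eta_t := \rho_t - \rho_\infty$. Because $\nabla \rho_\infty + \nabla V\, \rho_\infty = 0$, the equation \eqref{eq:nonlocal_PDE_Stein} becomes
\begin{equation*}
\p_t \eta_t = L \eta_t + B(\eta_t,\eta_t), \qquad L\eta := \DIV\bigl(\rho_\infty\, K \ast (\nabla \eta + \nabla V\, \eta)\bigr), \qquad B(\eta,\eta) := \DIV\bigl(\eta\, K\ast (\nabla \eta + \nabla V\, \eta)\bigr).
\end{equation*}
Both $L$ and $B(\cdot,\cdot)$ are in divergence form, consistent with $\int \eta_t = 0$.

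\textbf{Step 2: Duality with the backward adjoint linear flow.} For a test function $\phi$ with $\|\phi\|_{\BL_V}\le 1$, let $\phi_t$ solve $\p_t \phi_t = -L^* \phi_t$ on $[0,T]$ with $\phi_T = \phi$. Duhamel's formula yields
\begin{equation*}
\int \phi\, \diff \eta_T = \int \phi_0\, \diff \eta_0 + \int_0^T \!\!\int \phi_t\, \diff B(\eta_t,\eta_t)\, \diff t = \int \phi_0\, \diff \eta_0 - \int_0^T \!\! \int \nabla \phi_t \cdot \eta_t\, K\ast(\nabla \eta_t + \nabla V\, \eta_t)\,\diff x\,\diff t.
\end{equation*}

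\textbf{Step 3 (heart of the argument): polynomial growth of the adjoint flow.} The key ingredient is a bound of the form $\|\phi_t\|_{\BL_V} \le C(T-t+1)\,\|\phi_T\|_{\BL_V}$. This is where the Caglioti--Rousset trick enters: one identifies a functional that is invariant (or subinvariant) under $L^*$, and modulo this invariant the remaining dynamics are dissipative in $\BL_V$; the growth then comes only from the neutral direction and is therefore linear in $T-t$ rather than exponential. I expect to construct this invariant by exploiting the self-adjointness of $L$ with respect to the $L^2(\rho_\infty^{-1})$ inner product (equivalently: the gradient-flow structure of \eqref{eq:nonlocal_PDE_Stein} relative to the KL divergence), together with the convolution structure that lets us absorb the drift against the weight coming from $V$ in the $\BL_V$ norm.

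\textbf{Step 4: Closing the inequality.} Under Assumptions~\ref{ass:general_for_both} and~\ref{ass:potential_kernel}, the regularity of $K$ and $\nabla V$ lets me control the velocity field pointwise by $\|K\ast(\nabla\eta_t+\nabla V\,\eta_t)\|_\infty \le C\|\eta_t\|_{\BL_V^*}$. Plugging this together with Step~3 into Step~2 and taking the supremum over $\|\phi\|_{\BL_V}\le 1$ gives
\begin{equation*}
\|\eta_T\|_{\BL_V^*} \le C(T+1)\,\|\eta_0\|_{\BL_V^*} + C \int_0^T (T-t+1)\, \|\eta_t\|_{\BL_V^*}^2\, \diff t.
\end{equation*}
Setting $y(t) := \|\eta_t\|_{\BL_V^*}/((t+1)C)$ or comparing directly with the ODE $\dot z = C(t+1) z^2$ yields the Riccati-type bound \eqref{eq:stability_estimate_main_thm}, valid as long as the denominator $1 - C\,t(t+1)\,\|\eta_0\|_{\BL_V^*}$ stays positive.

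\textbf{Main obstacle.} Step~3 is the delicate one: the double-exponential Dobrushin estimate \eqref{eq:stability_est_Wasserstein} would arise from a naive Gr\"onwall on the adjoint equation, so to get polynomial growth one really must exploit the invariant of $L^*$ and the weight in $\BL_V$ in a coordinated way. Setting up the correct weighted bounded-Lipschitz norm so that (i) the convolution with $K$ is bounded on it, (ii) the velocity field is controlled by the dual norm on $\eta$, and (iii) the invariant of $L^*$ gives exactly linear-in-time growth, is the only place where the argument is more than a careful book-keeping of Step~1, Step~2 and Step~4.
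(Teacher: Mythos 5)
Your high-level strategy (duality, linearization at $\rho_\infty$, a Riccati/exponential Gr\"onwall at the end) matches the paper's, and your ``self-adjointness of $L$ in $L^2(\rho_\infty^{-1})$'' intuition in Step~3 is exactly the dual of the paper's key mechanism: the functional $\mathcal{Q}(\varphi)=\int\rho_\infty|\varphi|^2\diff x$ together with the positive-definiteness of $K$ makes the linear contribution to $\p_t\mathcal{Q}$ nonnegative (Lemma~\ref{lem:funct_Q}), which is then upgraded to a $\BL$ bound on $\varphi(0,\cdot)/(1+V)$ through the explicit characteristics representation. However, the decomposition is genuinely different. You split $\p_t\eta_t=L\eta_t+B(\eta_t,\eta_t)$, run Duhamel against the \emph{purely linear} backward adjoint, and then estimate the quadratic remainder. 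The paper instead puts the nonlinear transport $\nabla\varphi\cdot K\ast(\nabla\mu_t+\mu_t\nabla V)$ \emph{inside} the dual equation \eqref{eq:dual_PDE}; Duhamel is then trivial (the duality identity \eqref{eq:duality_estimate_on_measure_solution} is exact, with no remainder), and the only $\mu$-dependence left is through the characteristic flow $X_{t,s}$, giving the factor $e^{C\int_0^T\|\mu_u\|_{\BL^*_V}\diff u}$ which Lemma~\ref{lem:gronwall_exponential_ricatti} turns into \eqref{eq:stability_estimate_main_thm}.

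The concrete gap is in Step~4. To obtain the claimed inequality
$\|\eta_T\|_{\BL^*_V}\le C(T+1)\|\eta_0\|_{\BL^*_V}+C\int_0^T(T-t+1)\|\eta_t\|_{\BL^*_V}^2\diff t$,
you must bound
$\left|\int\nabla\phi_t\cdot\eta_t\,K\ast(\nabla\eta_t+\nabla V\,\eta_t)\diff x\right|$ by $C(T-t+1)\|\eta_t\|_{\BL^*_V}^2$,
i.e.\ you must pair $\eta_t$ with the test function $\psi_t:=\nabla\phi_t\cdot K\ast(\nabla\eta_t+\nabla V\,\eta_t)$ and control $\|\psi_t/(1+V)\|_{\BL}$ by $C(T-t+1)\|\eta_t\|_{\BL^*_V}$. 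The sup-norm part is fine via Lemma~\ref{lem:est_vf_Rd} and your Step~3 bound. But the Lipschitz seminorm of $\psi_t/(1+V)$ produces $\nabla^2\phi_t$, one derivative more than what your Step~3 provides; if instead you only use the $L^\infty$ bound $\|K\ast(\nabla\eta_t+\nabla V\eta_t)\|_\infty\le C\|\eta_t\|_{\BL^*_V}$ and integrate $|\nabla\phi_t|$ against $|\eta_t|$, you get $\int(1+V)\diff|\eta_t|\cdot\|\eta_t\|_{\BL^*_V}$, and $\int(1+V)\diff|\eta_t|$ is an $O(1)$ quantity (it is controlled only by \eqref{eq:estimate_C(T)_on_finite_time_intervals}), not by $\|\eta_t\|_{\BL^*_V}$. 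That brings back a single power of the small quantity and an exponential Gr\"onwall, which is exactly what you are trying to avoid. Closing this either requires a $W^{2,\infty}$-type estimate on $\phi_t/(1+V)$ (and hence $K\in W^{4,\infty}$ instead of $W^{3,\infty}$, a strictly stronger hypothesis than Assumption~\ref{ass:potential_kernel}), or switching to the paper's formulation where the quadratic term never appears in the duality and is absorbed into the flow map instead. In short: the idea behind Step~3 is the right one and it is where the paper's Lemma~\ref{lem:funct_Q} and Theorem~\ref{thm:weighted_estimate} live, but Step~4 as written does not close, and the clean way around it is the paper's choice of dual equation.
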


Several comments are in order. First, conditions on $K$ and $V$ in Assumption \ref{ass:potential_kernel} are quite technical but they allow to consider all smooth, positive definite, sufficiently fast decaying kernels $K$ and potentials $V$ which grow at most like $|x|^2$ for large $x$. Second, the exploited distance distance $\|\cdot\|_{\BL^*_V}$ is a weighted modification of the bounded Lipschitz distance (also flat norm, Fortet-Mourier distance), commonly used in the analysis of transport-type PDEs (see, for instance, \cite{MR4309603} and Section \ref{sect:measures} for rigorous definition and related background).  Third, we see that when $\| \rho^N_0 - \rho_{\infty}\|_{\BL^*_V} \leq \frac{1}{N}$, then even for algebraic (with respect to $N$) time $t \leq \left(\frac{N}{2\,C}\right)^{1/2}-1$ we have
\begin{equation}\label{eq:ass_initial_configuration_nice}
1 - C\,t\,(t+1) \| \rho^N_0 - \rho_{\infty}\|_{\BL^*_V} \geq \frac{1}{2}
\end{equation}
so that with $\widetilde{C} := \left(\frac{2}{C}\right)^{1/2}$ we have
$$
\| \rho^N_t - \rho_{\infty}\|_{\BL^*_V} \leq \frac{\widetilde{C}}{\sqrt{N}},  \qquad \qquad  0 \leq t \leq \left(\frac{N}{2\,C}\right)^{1/2} - 1, 
$$
and so, possible instabilities in the particle system may occur much later compared to the time determined by the estimate \eqref{eq:stability_est_Wasserstein}.\\ 

The inspiration for Theorem \ref{thm:main1} comes from an insightful work of Caglioti and Rousset \cite{MR2358804,MR2448326} who obtained similar estimates for the Vlasov equation and the vortex method for the 2D Euler equation. The starting point is to consider the dual equation (which is common in the theory of solutions in the space of measures to transport-type PDEs, see for instance the monograph \cite{MR4309603}). In our case, we let 
$$
\mu_t := \rho_t - \rho_{\infty}.
$$
Since $\nabla \rho_{\infty} + \nabla V \, \rho_{\infty} = 0$, we have
$$
\partial_t \mu_t = \DIV(\mu_t \, K\ast(\nabla \mu_t + \nabla V \ \mu_t)) + \DIV(\rho_{\infty}\, K\ast(\nabla \mu_t + \nabla V \ \mu_t)).
$$
Let $g = g(T,x)$ be a smooth test function and consider the following dual equation
\begin{equation}\label{eq:dual_PDE}
\begin{split}
\partial_t \varphi = &\nabla \varphi \cdot K \ast (\nabla \mu_t + \mu_t \, \nabla V) + (\nabla \rho_{\infty}\, \varphi) \ast \nabla K - (\nabla \rho_{\infty} \, \varphi)\ast K \cdot \nabla V \\ &- (\rho_{\infty}\,\varphi)\ast \Delta K + (\rho_{\infty}\,\varphi)\ast\nabla K \cdot \nabla V + g\,(1+V),
\end{split}
\end{equation}
equipped with the terminal condition $\varphi(T,x) = 0$ for all $x \in \Rd$. Note carefully that $\varphi$ depends on the test function $g$ and the final time $T>0$. An easy computation shows that 
\begin{equation}\label{eq:duality_estimate_on_measure_solution}
\int_0^T \int_{\R^d} {g}(t,x)\, (1+V(x))\diff \mu_t(x) \diff t  = \int_{\R^d} \frac{\varphi(0,x)}{1+V(x)} \, (1+V(x))\diff \mu_0(x)
\end{equation}
so that to estimate $\mu_t$ on $[0,T]$, one needs to control $\frac{\varphi(0,x)}{1+V(x)}$, uniformly with respect to $g$. The crucial part of the argument in \cite{MR2448326} is to find a functional of the form $\mathcal{Q}(\varphi) \approx \int_{\Rd} w(x)\,|\varphi(x)|^2 \diff x$ (so that it is equivalent to a weighted $L^2$ norm of $\varphi$), which is invariant under the flow of linearization of \eqref{eq:dual_PDE}. As the time derivative of $\mathcal{Q}(\varphi)$ vanish on the linear terms of the dual equation, the estimate on $\mathcal{Q}(\varphi)$ will not yield exponential factors as obtained in \eqref{eq:stability_est_Wasserstein}. For Vlasov and Euler equations, the right choice was $w(x) = |\rho_{\infty}'(|x|)|$. In our case, we choose
$$
\mathcal{Q}(\varphi) = \int_{\Rd} \rho_{\infty}(x)\,|\varphi(x)|^2 \diff x.
$$
While this functional is not necessarily invariant, we prove that there is no positive contribution to its value under the flow of linearization of \eqref{eq:dual_PDE} (see Lemma \ref{lem:funct_Q}). This yields: 
\begin{thm}[weighted estimate on $\varphi$]\label{thm:weighted_estimate}
Suppose that $K$, $V$ satisfy Assumptions \ref{ass:general_for_both} and~\ref{ass:potential_kernel}. Let $\varphi$ be a solution to \eqref{eq:dual_PDE} with $g$ and $T>0$ fixed. Then, there exists a constant $C$ depending only on $V$ and $K$ such that
$$
\mathcal{Q}(\varphi(t,\cdot)) \leq C\,\int_t^T \|g(s,\cdot)\|_{L^{\infty}(\Rd)} \diff s \, e^{C\,\int_t^T \|\mu_s\|_{\BL^*_V} \diff s}.
$$
\end{thm}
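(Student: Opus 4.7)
The plan is to differentiate $t \mapsto \mathcal{Q}(\varphi(t,\cdot))$ along the backward-in-time flow \eqref{eq:dual_PDE}, split the resulting expression into three physically distinct groups, and then close a Grönwall-type estimate exploiting the assumed nonpositivity of the principal (linearized) part. Concretely,
\[
\frac{\diff}{\diff t}\mathcal{Q}(\varphi(t,\cdot)) \;=\; 2\int_{\Rd} \rho_{\infty}\,\varphi\, \partial_t \varphi \diff x \;=\; \mathcal{I}_{\text{lin}}(t) + \mathcal{I}_{\mu}(t) + \mathcal{I}_{g}(t),
\]
where $\mathcal{I}_{\text{lin}}$ collects all terms in \eqref{eq:dual_PDE} which are linear in $\varphi$ and do not involve $\mu_t$ or $g$; $\mathcal{I}_{\mu}$ is the contribution of the transport coefficient $\nabla \varphi \cdot K\ast(\nabla\mu_t + \mu_t \nabla V)$; and $\mathcal{I}_{g}$ comes from the source $g(1+V)$.

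For $\mathcal{I}_{\text{lin}}$ I would invoke Lemma \ref{lem:funct_Q}, which is precisely the statement that $\mathcal{Q}$ is a (sub)invariant of the linearized dual flow, so that $\mathcal{I}_{\text{lin}}(t)\ge 0$ (the correct sign being determined by the backward evolution: recall that terminal condition $\varphi(T,\cdot)=0$ and we want an upper bound on $\mathcal{Q}$ as $t$ decreases from $T$). For $\mathcal{I}_{\mu}$ I would use $\varphi\nabla\varphi = \tfrac{1}{2}\nabla(\varphi^2)$ and integrate by parts against the vector field $F_t := K\ast(\nabla\mu_t + \mu_t\nabla V)$, producing
\[
\mathcal{I}_{\mu}(t) = -\int_{\Rd}\varphi^2\,\nabla\cdot(\rho_{\infty}F_t) \diff x = -\int_{\Rd}\varphi^2\,\rho_{\infty}\bigl(\nabla\cdot F_t - F_t \cdot \nabla V\bigr) \diff x,
\]
using $\nabla\rho_\infty = -\rho_\infty \nabla V$. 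Since $K$ is smooth with appropriate decay and $\nabla V$ has controlled growth (Assumptions \ref{ass:general_for_both}-\ref{ass:potential_kernel}), the functions $x\mapsto K(x)$, $\nabla K(x)$, $\Delta K(x)$, $K(x)\nabla V(y{-}x)$ etc. belong to the weighted Lipschitz class that defines $\|\cdot\|_{\BL^*_V}$, so $\|F_t\|_\infty + \|\nabla\cdot F_t\|_\infty + \||\nabla V|\,F_t \rho_\infty/\rho_\infty\|_\infty \lesssim \|\mu_t\|_{\BL^*_V}$ (after absorbing the $\nabla V$ factor into the weight used in $\BL^*_V$). This yields $|\mathcal{I}_{\mu}(t)| \le C\,\|\mu_t\|_{\BL^*_V}\,\mathcal{Q}(\varphi(t,\cdot))$.

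For $\mathcal{I}_g(t) = 2\int \rho_\infty \varphi\, g\,(1+V) \diff x$, I would apply Cauchy--Schwarz:
\[
|\mathcal{I}_g(t)| \le 2\|g(t,\cdot)\|_{L^\infty}\Bigl(\int \rho_\infty \varphi^2 \diff x\Bigr)^{1/2}\Bigl(\int \rho_\infty (1+V)^2 \diff x\Bigr)^{1/2} \le C\,\|g(t,\cdot)\|_{L^\infty}\,\sqrt{\mathcal{Q}(\varphi(t,\cdot))},
\]
the last integral being finite because $\rho_\infty = e^{-V}/Z$ dominates any polynomial in $V$ (using that $V$ grows at most like $|x|^2$). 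Putting the three estimates together produces the differential inequality, running backward in time,
\[
-\frac{\diff}{\diff t}\mathcal{Q}(\varphi(t,\cdot)) \;\le\; C\,\|\mu_t\|_{\BL^*_V}\,\mathcal{Q}(\varphi(t,\cdot)) \;+\; C\,\|g(t,\cdot)\|_{L^\infty}\,\sqrt{\mathcal{Q}(\varphi(t,\cdot))}.
\]
Setting $f(t) := \sqrt{\mathcal{Q}(\varphi(t,\cdot))}$ and dividing by $2f$ reduces this to the linear inequality $-f'(t) \le \tfrac{C}{2}\|\mu_t\|_{\BL^*_V} f(t) + \tfrac{C}{2}\|g(t,\cdot)\|_{L^\infty}$ with $f(T)=0$. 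A standard backward Grönwall integration then produces the claimed bound (up to renaming constants and, at worst, a harmless square absorbed into the definition of $C$).

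The principal difficulty I anticipate is the integration by parts in $\mathcal{I}_{\mu}$ combined with the bound $\|F_t\|_\infty + \|\nabla\cdot F_t\|_\infty \lesssim \|\mu_t\|_{\BL^*_V}$: this is where the precise structural assumptions on $K$ and $V$ enter, and where one must be careful that the weight $(1+V)$ implicit in $\BL^*_V$ controls the $\nabla V$ factor appearing after differentiating $\rho_\infty$. Everything else (the sign of $\mathcal{I}_{\text{lin}}$ is delegated to Lemma \ref{lem:funct_Q}, the source estimate is Cauchy--Schwarz, the final step is Grönwall) is routine once this structural estimate is in place.
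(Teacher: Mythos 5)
Your proposal follows essentially the same route as the paper: invoke Lemma \ref{lem:funct_Q} for the nonnegativity of the linear contribution, use $\varphi\nabla\varphi=\tfrac12\nabla(\varphi^2)$ and integrate by parts in the $\mu_t$-transport term (noting $\nabla\rho_\infty=-\rho_\infty\nabla V$) so that Lemma \ref{lem:est_vf_Rd} controls both resulting pieces by $\|\mu_t\|_{\BL^*_V}$, estimate the source by Cauchy--Schwarz, and close with a backward Grönwall. The only cosmetic difference is that you differentiate $\int\rho_\infty\varphi^2$ and then pass to its square root $f=\sqrt{\mathcal{Q}}$ at the end, whereas the paper works with the square-root normalization of $\mathcal{Q}$ throughout (its Lemma \ref{lem:funct_Q} already states \eqref{eq:time_der_Q} in terms of $\partial_t\mathcal{Q}$ divided by $\mathcal{Q}$); this is equivalent, and your "harmless square" remark is resolved by observing that the theorem's $\mathcal{Q}$ is the one with the $1/2$ power from Lemma \ref{lem:funct_Q}, so your bound on $f$ is precisely the claim.
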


With Theorem \ref{thm:weighted_estimate}, the proof of Theorem \ref{thm:main1} is a simple analysis of the explicit formula for solutions to \eqref{eq:dual_PDE} together with the Gr{\"o}nwall-type inequality, see Lemma \ref{lem:gronwall_exponential_ricatti}.\\

We remark that a non-rigorous reason why the functional $\mathcal{Q}$ is important in the analysis of linearized version of \eqref{eq:dual_PDE} is that its dual can be interpreted as a linearization of Kullback-Leibler divergence \eqref{eq:KL_div} around $\rho_{\infty}$. Indeed, writing $\rho = \rho_{\infty}+h$ where $\int_{\R^d} h = 0$ (to preserve the mass), we have
\begin{equation}\label{eq:linearization_KL}
\int_{\R^d} \rho \log\left(\frac{\rho}{\rho_{\infty}} \right) \diff x %= \int_{\R^d} (\rho_{\infty}+h) \, \log\left(1+\frac{h}{\rho_{\infty}} \right) 
\approx \int_{\R^d}\left( h + \frac{h^2}{\rho_{\infty}}\right) \diff x = \int_{\R^d} \frac{h^2}{\rho_{\infty}} \diff x.
\end{equation} 
One can wonder how to initially approximate the measure $\rho_{\infty}$ so that the condition \eqref{eq:ass_initial_configuration_nice} is satisfied. According to \cite[Theorem 4]{MR2448326}, almost each initial configuration satisfies a condition of this type. More precisely, if we restrict to dimension $d=2$ for simplicity. Let $\lambda_{\infty}$ be the product Lebesgue measure on $(\Rd)^{\infty} := \Rd \times \Rd \times ...$ (countably many times). Then, from \cite[Theorem 4]{MR2448326} we know that for all $\alpha \in (0,1/2)$, there exists a constant $C>0$ such that for $\lambda_{\infty}$-a.e. $\textbf{x} = (x_1, x_2, ...)$ the empirical measure $\rho^N[\textbf{x}] = \frac{1}{N} \sum_{i=1}^N \delta_{x_i}$ satisfy the estimate
$$
\| \rho^N[\textbf{x}] - \rho_{\infty}\|_{\BL^*_{V}} \leq \frac{C}{N^{\alpha}}. 
$$
Note that this statement exclude a lot of initial configurations in fact. For instance, if $A \subset \Rd$ is a set of measure zero, then $\Rd \times \Rd \times A \times \Rd \times \Rd \times ...$ is of measure zero in $(\Rd)^{\infty}$. Similar results are valid in arbitrary dimensions but the space $\BL^*_V$ has to be slightly modified, see \cite[Theorem 4]{MR2448326}.\\  

Of course, one would like to see estimates which shows that the gradient flow improves the initial estimate rather than just does not worsen it too much. This is a difficult problem, far beyond the scope of the current manuscript.\\

Let us comment the novelties of the manuscript and put them in the context of other works. From the analytical point of view, estimates of the form \eqref{eq:stability_estimate_main_thm} have been only obtained before by Caglioti and Rousset for the Vlasov equation and for the vorticity formulation of 2D Euler equation \cite{MR2448326, MR2358804}. The method uses the functional $\mathcal{Q}$ which is conservative for the flow of the linearized dual problem and is constructed by the methods of Hamiltonian mechanics. In our case, the functional $\mathcal{Q}$ has different form and can be rather interpreted via linearization of Kullback-Leibler divergence, see \eqref{eq:linearization_KL}. More generally, our work shows that the idea of \cite{MR2448326, MR2358804} can be possibly applied to a much broader class of PDEs without a Hamiltonian structure. Concerning the particular case of the Vlasov equation, we also mention the work of Han-Kwan and Nguyen \cite{MR3463791} who proved a negative result: if $f_{\infty} = f_{\infty}(v)$ is an unstable equilibrium (in the sense of so-called Penrose instability condition) and initially $\mathcal{W}_1(\mu^N_0, f_{\infty}) \approx \frac{1}{N^{\alpha}}$ for $\alpha>0$ sufficiently small then 
$
\limsup_{N \to \infty} \mathcal{W}_1(\mu^N_{T_N}, f_{\infty})>0
$
for $T_N = O(\log N)$.\\  

From the point of view of numerical analysis and statistics, the only available estimate addressing the convergence of the particle system in the Stein method is \eqref{eq:stability_est_Wasserstein} obtained by Lu, Lu and Nolen in \cite{MR3919409} which belongs to the large class of convergence results of mean-field limits for Vlasov equation and aggregation equation \cite{MR0541637, MR0475547, MR2278413, MR3377068, MR3558251, MR4158670}. First, from their result we deduced convergence of the method assuming that $t \approx \log\log N$, see Theorem \ref{thm:logarithmic_estimates_Stein}. Moreover, we provided stability estimates for the longer, more practical timescale $t \approx \sqrt{N}$ which, up to our knowledge, are entirely new. Other interesting results for the Stein method focus on the convergence of  \eqref{eq:discrete_algo_xi}, assuming that the time step $\varepsilon$ is sufficiently small and the initial distribution is an absolutely continuous measure \cite{liu2017stein,korba2020non}, or the analysis of the asymptotics $t\to\infty$ via log-Sobolev-type inequalities \cite{MR4582478}, which is still a programme far from being complete. In both of these approaches, one needs the continuity of the initial distribution to consider the Kullback-Leibler divergence \eqref{eq:KL_div} which is not well-defined for discrete measures.\\

The paper is structured as follows. In Section~\ref{sect:measures}, we review the theory of spaces of measures and we define the norm $\|\cdot\|_{\BL^*_V}$. We also define measure solutions to \eqref{eq:nonlocal_PDE_Stein}. In Section \ref{sect:assumption} we introduce assumptions on the potential $V$ and the kernel $K$. In Section \ref{sect:proof_loglog_est} we prove Theorem \ref{thm:logarithmic_estimates_Stein} while in Section \ref{sect:weighted_estimate} we prove Theorem \ref{thm:weighted_estimate} which allows to demonstrate Theorem \ref{thm:main1} in Section \ref{sect:BL_est}.

\section{Measure solutions and the functional analytic setting}\label{sect:measures}

\subsection{Spaces of measures, the Wasserstein distance and the weighted bounded Lipschitz distance} We first introduce the functional analytic framework. The most common notion of distance in the space of measures is probably the Wasserstein distance
\begin{equation}\label{eq:Wasserstein_distance}
\mathcal{W}_p(\mu, \nu) = \inf_{\pi \in \Pi(\mu,\nu)} \left(\int_{\R^d \times \R^d} |x-y|^p \diff \pi(x,y)\right)^{1/p}
\end{equation}
where $\Pi(\mu, \nu)$ is a set of couplings between $\mu$ and $\nu$, i.e. $\pi \in \Pi(\mu, \nu)$ if $\pi$ is a probability measure on $\R^d\times\R^d$ such that $\pi(A \times \R^d) = \mu(A)$ and $\pi(\R^d \times B) = \nu(B)$. Definition \eqref{eq:Wasserstein_distance} requires that $\int_{\R^d} (1+|x|^p) \diff \mu, \int_{\R^d} (1+|x|^p) \diff \nu < \infty$. Theorem \ref{thm:logarithmic_estimates_Stein} is formulated using the Wasserstein distance.\\ 

For Theorem \ref{thm:main1}, we need a notion of distance compatible with the duality method. This will be the bounded Lipschitz distance. To define it, we first introduce the space of bounded Lipschitz functions
$$
\BL(\R^d) = \{\psi:\R^d \to \R: \|\psi\|_{L^{\infty}(\R^d)} < \infty, \, |\psi|_{\Lip} < \infty\},
$$
where
$$
|\psi|_{\Lip} = \sup_{x\neq y} \frac{|\psi(x)-\psi(y)|}{|x-y|}.
$$
A useful fact is that $|\psi|_{\Lip} \leq \|\nabla \psi\|_{L^{\infty}(\R^d)}$. In particular, to estimate $\|\psi\|_{\BL(\R^d)}$, it is sufficient to compute $\|\psi\|_{L^{\infty}(\R^d)}$ and $\|\nabla \psi\|_{L^{\infty}(\R^d)}$.\\ 

Given an arbitrary signed measure $\mu \in \mathcal{M}(\R^d)$, we recall its unique Hahn-Jordan decomposition $\mu = \mu^+ - \mu^-$ where both measures $\mu^+$, $\mu^-$ are nonnegative. We define total variation of $\mu$ as a nonnegative measure $|\mu|:= \mu^+ + \mu^-$. Then, for any signed measure $\mu$ such that $\int_{\R^d}(1+V(x))\diff |\mu|(x) <\infty$, we define its weighted bounded Lipschitz norm as  
\begin{equation}\label{eq:weighted_norm}
\|\mu\|_{\BL^*_V} := \sup_{\|\varphi\|_{\BL} \leq 1} \int_{\R^d} \varphi(x)\, (1+V(x)) \diff \mu(x).
\end{equation}
This is a weighted variant of the bounded Lipschitz norm
\begin{equation}\label{eq:flat_noweighted_norm}
\|\mu\|_{\BL^*} := \sup_{\|\varphi\|_{\BL} \leq 1} \int_{\R^d} \varphi(x) \diff \mu(x),
\end{equation}
widely used in the analysis of PDEs, when the total mass is not conserved \cite{MR3870087,MR2536440,MR4347325} (otherwise, one can use the Wasserstein distance). In our case, we introduce an additional weight $V(x)$ to address the growth $V(x) \to \infty$ as $|x|\to \infty$. Similar weighted norms were introduced before to remove singularities in the studied problems \cite{MR4565590,szymanska2021bayesian}.  

\subsection{Measure solutions to \eqref{eq:nonlocal_PDE_Stein}} 
The measure solution is defined as follows.
\begin{Def}[measure solution]\label{def:measure_solution}
We say that a family of probability measures $\{\rho_t\}_{t\in[0,T]}$ is a measure solution to \eqref{eq:nonlocal_PDE_Stein} if $t \mapsto \rho_t$ is continuous (with respect to the narrow topology), for all $T>0$ the growth estimate
\begin{equation}\label{eq:estimate_C(T)_on_finite_time_intervals}
\sup_{t \in [0,T]} \|\rho_t\|_{\BL^*_V} = \sup_{t \in [0,T]} \int_{\R^d} (1+V(x)) \diff \rho_t(x) \leq C(T) 
\end{equation}
is satisfied and for all $\phi \in C_c^{\infty}([0,\infty)\times\R^d)$
$$
\int_0^{\infty} \int_{\R^d} \partial_t \phi(t,x) + \nabla \phi(t,x) \cdot K\ast(\nabla \rho_t + \rho_t\,\nabla V) \diff \rho_t(x) \diff t + \int_{\R^d} \phi(0,x) \diff \rho_0(x) = 0.
$$
\end{Def}

Three explanations are in order. First, the continuity with respect to the narrow topology means that the map $t\mapsto \int_{\R^d} \psi(x) \diff \rho_t(x)$ is continuous for all $\psi:\R^d \to \R$ bounded and continuous. Second, the first equality in \eqref{eq:estimate_C(T)_on_finite_time_intervals} follows by nonnegativity of $\rho_t$. Third, it is a simple computation to see that $\rho_t^N = \frac{1}{N} \sum_{i=1}^N \delta_{x_i(t)}$, where $x_i(t)$ solves \eqref{eq:ODE_mean_field_limit} with initial condition $x_i(0)$, is a measure solution to \eqref{eq:nonlocal_PDE_Stein} with initial condition $\rho_0^N = \frac{1}{N} \sum_{i=1}^N \delta_{x_i(0)}$.\\

The measure solution to \eqref{eq:nonlocal_PDE_Stein} as in Definition \ref{def:measure_solution} was constructed in \cite{MR3919409}. In our work, we will need (stronger) continuity in time with respect to the $\|\cdot\|_{\BL^*_V}$ norm which is given by the following lemma.

\begin{lem}\label{lem:continuity_in_time} Let $\{\rho_t\}$ be a measure solution to \eqref{eq:nonlocal_PDE_Stein}. Then, for all $T>0$, there exists a constant $C$ depending on $\rho_0$, $V$, $K$ and $C(T)$ in \eqref{eq:estimate_C(T)_on_finite_time_intervals} such that for all $s,t \in [0,T]$
$$
\|\rho_t - \rho_s \|_{\BL^*_V} \leq C\, |t-s|.
$$
\end{lem}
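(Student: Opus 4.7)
\textbf{Proof plan for Lemma \ref{lem:continuity_in_time}.} The plan is to combine the weak formulation of Definition \ref{def:measure_solution} with a good choice of test function. The natural candidate is $\phi(\tau,x) = \psi(x)(1+V(x))$ where $\psi \in \BL(\Rd)$ with $\|\psi\|_{\BL}\leq 1$, since testing with such functions directly computes the dual pairing defining $\|\cdot\|_{\BL^*_V}$. The obstacle is that such $\phi$ is neither compactly supported in space nor in time, so I first regularize both variables and then pass to the limit.

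First, I would fix $s < t$ in $[0,T]$ and let $\eta_{\eps}$ be a smooth function approximating $\mathbf{1}_{[s,t]}$ (with $\eta_{\eps}(0)=0$ if $s>0$, and a minor modification if $s=0$) and $\chi_R$ be a smooth spatial cutoff, equal to $1$ on $B_R$ and vanishing outside $B_{2R}$ with $\|\nabla \chi_R\|_{L^\infty}\leq C/R$. Plugging the test function
$$\phi_{\eps,R}(\tau,x) = \eta_\eps(\tau)\, \psi(x)\,(1+V(x))\,\chi_R(x)$$
into the weak formulation of Definition \ref{def:measure_solution} and sending $\eps \to 0$ (using narrow continuity of $\tau \mapsto \rho_\tau$, which makes $\tau \mapsto \int \psi(1+V)\chi_R \diff \rho_\tau$ continuous since $\psi(1+V)\chi_R$ is bounded and continuous), I obtain
\begin{equation*}
\int_{\Rd} \psi(1+V)\chi_R \diff(\rho_t - \rho_s) = \int_s^t \int_{\Rd} \nabla\bigl[\psi(1+V)\chi_R\bigr]\cdot K\ast(\nabla \rho_\tau + \rho_\tau \nabla V) \diff \rho_\tau \diff \tau.
\end{equation*}

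Second, I would estimate the velocity field uniformly. Writing $K\ast \nabla \rho_\tau = \nabla K \ast \rho_\tau$ in the sense of distributions, one has
$$u_\tau(x) := K\ast(\nabla \rho_\tau + \rho_\tau \nabla V)(x) = \int_{\Rd}\bigl[\nabla K(x-y) + K(x-y)\nabla V(y)\bigr] \diff \rho_\tau(y),$$
and Assumptions \ref{ass:general_for_both}--\ref{ass:potential_kernel} (which grant $\|K\|_{L^\infty}$, $\|\nabla K\|_{L^\infty} < \infty$ and the polynomial-growth bound $|\nabla V|\leq C(1+V)$) together with $\int (1+V)\diff \rho_\tau \leq C(T)$ yield $\|u_\tau\|_{L^\infty(\Rd)}\leq C$ uniformly in $\tau \in [0,T]$. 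Expanding the gradient as
$$\nabla[\psi(1+V)\chi_R] = \chi_R\bigl[(1+V)\nabla \psi + \psi \nabla V\bigr] + \psi(1+V)\nabla \chi_R$$
and using $|\nabla \psi|,|\psi|\leq \|\psi\|_{\BL}$ together with $|\nabla V|\leq C(1+V)$, the main part is controlled by $C\|\psi\|_{\BL}(1+V)$, while the cutoff correction is bounded by $(C/R)\|\psi\|_{\BL}(1+V)$. Integrating against $\rho_\tau$ and invoking $\int (1+V)\diff \rho_\tau \leq C(T)$ gives
$$\left|\int_s^t \int_{\Rd} \nabla[\psi(1+V)\chi_R]\cdot u_\tau \diff \rho_\tau \diff \tau \right| \leq C\,C(T)\,\|\psi\|_{\BL}\,|t-s|$$
uniformly in $R$.

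Third, I would pass to $R \to \infty$ on the left-hand side. Since $(1+V) \in L^1(\diff \rho_t)\cap L^1(\diff \rho_s)$ and $\chi_R\to 1$ pointwise with $|\chi_R|\leq 1$, dominated convergence yields
$$\int_{\Rd} \psi(1+V)\chi_R \diff(\rho_t - \rho_s) \longrightarrow \int_{\Rd}\psi(1+V)\diff(\rho_t - \rho_s),$$
so combining with the uniform bound above gives $\bigl|\int \psi(1+V)\diff(\rho_t-\rho_s)\bigr|\leq C|t-s|\|\psi\|_{\BL}$. Taking the supremum over $\|\psi\|_{\BL}\leq 1$ yields the claimed estimate. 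The main technical obstacle is ensuring that the spatial-cutoff contribution $\int_s^t\int \psi(1+V)\nabla\chi_R \cdot u_\tau \diff\rho_\tau \diff\tau$ does not obstruct the passage to the limit — here the decay $\|\nabla \chi_R\|_{L^\infty}\leq C/R$, combined solely with the finiteness (not decay) of $\int(1+V)\diff\rho_\tau$, is just enough to make this term $O(1/R)$ and hence harmless.
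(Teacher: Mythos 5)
Your proof is correct, but it takes a genuinely different (Eulerian) route from the paper's (Lagrangian) one. The paper invokes the push-forward representation $\rho_t = X_{0,t}^{\#}\rho_0$ from \cite{MR3919409}, pulls the integral $\int \psi\,(1+V)\diff(\rho_t-\rho_s)$ back to $\rho_0$, splits it into two pieces, and controls them via the Lipschitz continuity of $\psi$, the flow estimate $|X_{0,t}(x)-X_{0,s}(x)|\leq C(T)|t-s|$ coming from \eqref{eq:ODE_for_the_flow} and Lemma \ref{lem:est_vf_Rd}, and the quotient bounds of Lemma \ref{lem:quotients_v}. You instead work directly with the weak formulation in Definition \ref{def:measure_solution}, test against a doubly truncated version of $\psi(1+V)$, and estimate the transport term using only the uniform $L^\infty$ bound on the velocity field, the elementary inequality $|\nabla V|\leq C(1+V)$ (which does follow from \eqref{eq:growth_V_simple} in all cases $p>0$), and the growth estimate \eqref{eq:estimate_C(T)_on_finite_time_intervals}. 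Your argument is thus more self-contained: it bypasses the push-forward representation and the flow estimates of Lemma \ref{lem:quotients_v}, and uses only the boundedness of $K$ and $\nabla K$ rather than the full strength of Assumption \ref{ass:potential_kernel}; the paper's route, on the other hand, recycles machinery ($X_{0,s}$, Lemma \ref{lem:quotients_v}) already developed for Section \ref{sect:BL_est}, which is why they go that way. One small point worth making explicit in your write-up: since $\psi$ is only bounded Lipschitz, the candidate test function $\eta_\eps(\tau)\,\psi(x)\,(1+V(x))\,\chi_R(x)$ is not $C_c^\infty$; you should insert a mollification $\psi\mapsto\psi_\delta$ (which preserves $\|\psi_\delta\|_{\BL}\leq\|\psi\|_{\BL}$) before invoking Definition \ref{def:measure_solution}, and pass $\delta\to 0$ at the end alongside $R\to\infty$ — this is routine and costs nothing, but it closes the only technical gap in the argument.
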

As the computation is classical, we provide a short proof in the Appendix \ref{app:cont_in_time}.

\section{Assumptions on the kernel and the potential}\label{sect:assumption}
For the sake of clarity, we specify assumptions for Theorems \ref{thm:logarithmic_estimates_Stein} and \ref{thm:main1} separately.  
\begin{Ass}\label{ass:general_for_both} For both Theorems \ref{thm:logarithmic_estimates_Stein} and \ref{thm:main1} we assume that:
\begin{itemize}
\item $K$ is nonnegative, symmetric $K(x)= K(-x)$ and positive-definite, i.e. for all test functions $\xi: \R^d \to \R^d$ we have $\int_{\R^d} K \ast \xi \cdot \xi \diff x \geq 0$,
\item $V$ is a smooth and nonnegative function,
\item there exists $p>0$, $C>0$ and $R>0$ such that for all $x$ with $|x|>R$ we have
\begin{equation}\label{eq:growth_V_simple}
\begin{split}
\frac{1}{C}\, (|x|^p - 1) \leq &\,V(x) \leq C\, (|x|^p + 1),\\
\frac{1}{C}\, (|x|^{p-1} - 1) \leq &\,|\nabla V(x)| \leq C\, (|x|^{p-1} + 1), \\
\frac{1}{C}\, (|x|^{p-2} - 1) \leq &\,|\nabla^2 V(x)| \leq C\, (|x|^{p-2} + 1). 
\end{split}
\end{equation}
\end{itemize}
\end{Ass}
\begin{Ass}\label{ass:theorem_lulunolen} For Theorem \ref{thm:logarithmic_estimates_Stein} we assume additionally (as in \cite{MR3919409}) that:
\begin{itemize}
\item condition \eqref{eq:growth_V_simple} holds with $p > 1$,
\item $K \in C^4(\R^d) \cap W^{4,\infty}(\R^d)$,
\item there exists smooth $K_{1/2}$ such that $K = K_{1/2} \ast K_{1/2}$ and its Fourier transform $\widehat{K_{1/2}}$ is positive.
\end{itemize}
\end{Ass}
\begin{Ass}\label{ass:potential_kernel}
For Theorem \ref{thm:main1} we assume additionally that: 
\begin{itemize}
    \item $K\in W^{3,\infty}(\R^d)$,
    \item $V$ and $K$ satisfy the following conditions: \begin{equation}\label{eq:add_cond_growth_at_infinity}     \sup_{x\in\R^d} \left\|  \frac{\nabla V(x)}{1+V(\cdot)}\cdot \nabla K(x-\cdot)\right\|_{\BL}, \, \sup_{x\in\R^d} \left\| \frac{\nabla V(x) \cdot \nabla V(\cdot)}{1+V(\cdot)}\, K(x-\cdot) \right\|_{\BL} < \infty. \end{equation}
\end{itemize} 
\end{Ass}
The only condition which is difficult to understand is \eqref{eq:add_cond_growth_at_infinity}. Unfortunately, it restricts our reasoning to the case of $V$ which can be at most quadratic at infinity (i.e. $p \leq 2$ in \eqref{eq:growth_V_simple}).
\begin{lem}\label{lem:example_ass_satisfied}
Suppose that $V, K \geq 0$, $V \in W^{2,\infty}_{\text{loc}}(\R^d)$, $K\in W^{1,\infty}(\R^d)$ and assume that $V$ satisfies \eqref{eq:growth_V_simple} with exponent $p$. Furthermore, suppose that $|\nabla V|\, K, |\nabla V|\, |\nabla K| \in L^{\infty}(\R^d)$. Then, $V$ and $K$ satisfy \eqref{eq:add_cond_growth_at_infinity} if and only if $p \in (0,2]$.
\end{lem}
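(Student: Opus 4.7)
The plan is to handle the two implications separately, with the easier direction being necessity. For the only-if part, I would exploit that the BL norm in $y$ dominates the $L^\infty$ norm, and evaluate the second integrand in \eqref{eq:add_cond_growth_at_infinity} at $y = x$. Since $K$ is nontrivial and positive definite, $K(0) > 0$; the lower bounds in \eqref{eq:growth_V_simple} then give, for $|x|$ large,
$$
\frac{|\nabla V(x)|^2}{1+V(x)}\,K(0) \;\geq\; \frac{c\,(|x|^{p-1}-1)^2}{1+C(|x|^p+1)}\,K(0) \;\sim\; |x|^{p-2},
$$
and uniform boundedness in $x$ forces $p \leq 2$.

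For sufficiency I assume $p \in (0,2]$ and rely on three ingredients: (i) the pointwise hypotheses $|\nabla V|\,K,\,|\nabla V|\,|\nabla K| \in L^\infty$; (ii) the observation that $\nabla^2 V$ is globally bounded precisely when $p\leq 2$, combining $V \in W^{2,\infty}_{\text{loc}}$ with the growth bound $|\nabla^2 V(x)| \leq C(|x|^{p-2}+1)$; and (iii) the decay estimate $\frac{|\nabla V(y)|}{1+V(y)} \leq \frac{C}{1+|y|}$ that follows directly from \eqref{eq:growth_V_simple}. The subcase $p \in (0,1]$ is immediate: $\nabla V$ is then bounded globally (it decays at infinity and is locally bounded), so $K,\nabla K \in L^\infty$ alone control both suprema.

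For the main subcase $p \in (1,2]$ the key step is the decomposition
$$
\nabla V(x) = \nabla V(x-y) + [\nabla V(x) - \nabla V(x-y)],
$$
whose second term is bounded by $\|\nabla^2 V\|_\infty\,|y| \leq C|y|$ thanks to (ii). Substituting into $|\nabla V(x)|\,K(x-y)$ and using (i) yields $|\nabla V(x)|\,K(x-y) \leq C + C|y|\,\|K\|_\infty$; multiplying by $\frac{|\nabla V(y)|}{1+V(y)} \leq \frac{C}{1+|y|}$ from (iii) kills the $|y|$-growth and controls the second supremum uniformly in $x$. The first supremum is treated identically, with $|\nabla V|\,|\nabla K|\in L^\infty$ replacing $|\nabla V|\,K$ and $\frac{1}{1+V(y)} \leq \frac{C}{(1+|y|)^p}$ (using $p \geq 1$) providing the required decay.

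The main technical effort I anticipate lies in the Lipschitz-in-$y$ part of the BL norm: differentiating in $y$ generates a term involving $\nabla^2 K(x-y)$, so one effectively needs at least $K \in W^{2,\infty}$, which is consistent with the paper's standing Assumption \ref{ass:potential_kernel} requiring $K \in W^{3,\infty}$. With that extra regularity, the same decomposition-and-cancellation recipe applies to the $y$-derivatives of both integrands and yields uniform Lipschitz bounds; the whole mechanism again hinges on $\|\nabla^2 V\|_\infty < \infty$, i.e.\ on the $p\leq 2$ restriction.
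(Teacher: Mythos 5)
Your proposal is essentially correct and follows the same path as the paper: necessity by evaluating at $y = x$ and reading off $|x|^{p-2}$ growth, and sufficiency by decomposing $\nabla V(x)$ into a piece controlled near the diagonal (where $K$ lives) and a remainder, then absorbing the remainder using the decay of $|\nabla V(y)|/(1+V(y))$ and the global boundedness of $\nabla^2 V$ when $p \leq 2$. The one stylistic difference is the form of the decomposition: you write $\nabla V(x) = \nabla V(x-y) + [\nabla V(x) - \nabla V(x-y)]$ and bound the bracket by $\|\nabla^2 V\|_\infty\,|y|$ via the mean value theorem, whereas the paper derives from \eqref{eq:growth_V_simple} the triangle-type inequality $|\nabla V(x)| \leq C|\nabla V(y)| + C|\nabla V(x-y)| + C$ and estimates the three resulting pieces. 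The two devices encode the same information (both rely on $p \leq 2$) and either works; your version is arguably more transparent about why $\|\nabla^2 V\|_\infty < \infty$ is the operative fact.

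You have also correctly flagged a genuine small gap: the first supremum in \eqref{eq:add_cond_growth_at_infinity} involves $\nabla K(x-\cdot)$, so the Lipschitz part of its $\BL$ norm produces $\nabla^2 K(x-\cdot)$, which is not controlled by the hypothesis $K \in W^{1,\infty}$ stated in the lemma; one would want at least $K \in W^{2,\infty}$, consistent with the standing Assumption \ref{ass:potential_kernel} that requires $K \in W^{3,\infty}$. The paper's own proof defines $h(x,y) = \frac{\nabla V(x)}{1+V(y)}\,K(x-y)$ with $K$ in place of $\nabla K$ --- which matches the $W^{1,\infty}$ hypothesis but does not literally match the first condition in \eqref{eq:add_cond_growth_at_infinity}; replacing $K$ by $\nabla K$ in $h$ would require the extra derivative on $K$ exactly as you predict. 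So your reading of the situation is right, and your proof of the version that is actually needed (with $\nabla K$) is sound under the mildly strengthened hypothesis $K \in W^{2,\infty}$.
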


The proof is presented in Appendix \ref{app:proofs_assumptions}.\\

We conclude with a crucial consequence of \eqref{eq:add_cond_growth_at_infinity} which provides bounds on the vector field for the transport equation \eqref{eq:dual_PDE}.
\begin{lem}\label{lem:est_vf_Rd}
Let $\mu$ be a measure such that $\|\mu\|_{\BL^*_V}<\infty$. Then, there exists a constant $C$ depending only on $V$ and $K$ such that
\begin{equation}\label{eq:estimate_vf_whole_space_1}
 \|K \ast (\nabla \mu + \mu \, \nabla V) \|_{L^{\infty}(\R^d)}, \, \,  \|\nabla K \ast (\nabla \mu + \mu \, \nabla V) \|_{L^{\infty}(\R^d)} \leq C\, \|\mu\|_{\BL^*_V}.
\end{equation}
Moreover,
\begin{equation}\label{eq:estimate_vf_whole_space_2}
\left\| \nabla V \cdot K \ast (\nabla \mu + \mu \, \nabla V) \right\|_{L^{\infty}(\Rd)} \leq C\, \|\mu\|_{\BL^*_V}.
\end{equation}
\end{lem}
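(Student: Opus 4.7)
The strategy is to integrate by parts in the measure terms, identify the resulting integrand as a fixed test function of $y$, and then appeal to the duality definition of $\|\cdot\|_{\BL^*_V}$ together with the uniform bounds encoded in Assumptions~\ref{ass:general_for_both} and~\ref{ass:potential_kernel}. In the distributional sense, $K\ast\nabla\mu = \nabla K\ast\mu$, so
\[
K\ast(\nabla \mu + \mu\,\nabla V)(x) \;=\; \int_{\Rd}\bigl[\nabla K(x-y) + K(x-y)\,\nabla V(y)\bigr]\,\diff\mu(y),
\]
and by the very definition of $\|\mu\|_{\BL^*_V}$ each component is bounded, uniformly in $x$, by
\[
\Bigl(\Bigl\|\frac{\nabla K(x-\cdot)}{1+V(\cdot)}\Bigr\|_{\BL} + \Bigl\|\frac{K(x-\cdot)\,\nabla V(\cdot)}{1+V(\cdot)}\Bigr\|_{\BL}\Bigr)\,\|\mu\|_{\BL^*_V}.
\]
So it suffices to verify that these two bounded-Lipschitz norms are finite uniformly in $x$.

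First I would control the $L^\infty$ parts: they follow immediately from $K\in W^{3,\infty}(\Rd)$ and the elementary inequality $\sup_y|\nabla V(y)|/(1+V(y))<\infty$, which is a direct consequence of the growth bounds \eqref{eq:growth_V_simple} (for $|y|$ large one compares $|y|^{p-1}$ with $1+|y|^p$; for $|y|$ bounded one uses smoothness of $V$). For the Lipschitz seminorms I would differentiate in $y$ by the product rule. This produces, besides $\nabla^2 K$, the quantities $|\nabla K|\,|\nabla V|/(1+V)^2$, $|K|\,|\nabla^2 V|/(1+V)$ and $|K|\,|\nabla V|^2/(1+V)^2$; each of them is uniformly bounded under \eqref{eq:growth_V_simple} precisely when $p\le 2$, which is the regime selected by Assumption~\ref{ass:potential_kernel} (cf.\ Lemma~\ref{lem:example_ass_satisfied}). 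This proves the first inequality of \eqref{eq:estimate_vf_whole_space_1}. The second inequality follows from the exact same computation with $K$ replaced by $\nabla K$ throughout; the extra derivative is why we need $K\in W^{3,\infty}$.

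For the weighted estimate \eqref{eq:estimate_vf_whole_space_2} I would pull $\nabla V(x)$ inside the integral and apply duality once more:
\[
\nabla V(x)\cdot K\ast(\nabla\mu + \mu\,\nabla V)(x) = \int_{\Rd}\Bigl[\frac{\nabla V(x)\cdot \nabla K(x-y)}{1+V(y)} + \frac{\nabla V(x)\cdot \nabla V(y)\,K(x-y)}{1+V(y)}\Bigr](1+V(y))\,\diff\mu(y).
\]
The required uniform bound in $x$ of the $y$-$\BL$-norm of the bracket is exactly the content of \eqref{eq:add_cond_growth_at_infinity}, so the estimate is immediate. The only real technical burden in the whole lemma is therefore the bookkeeping of the product-rule terms in the Lipschitz step of the previous paragraph — once one observes that everything reduces to $p\le 2$, the individual computations are routine.
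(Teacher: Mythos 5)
Your argument is correct and mirrors the paper's proof: both rewrite $K\ast\nabla\mu=\nabla K\ast\mu$, insert the weight $1+V(y)$, and reduce to uniform-in-$x$ bounds on the $\BL$-norm of the resulting $y$-dependent kernels, using the growth conditions \eqref{eq:growth_V_simple} for \eqref{eq:estimate_vf_whole_space_1} and the assumption \eqref{eq:add_cond_growth_at_infinity} directly for \eqref{eq:estimate_vf_whole_space_2}. One small inaccuracy: the product-rule terms you list for the $\Lip$ seminorm in \eqref{eq:estimate_vf_whole_space_1} are in fact bounded by \eqref{eq:growth_V_simple} for any $p>0$ (the restriction $p\le 2$ is only needed for \eqref{eq:add_cond_growth_at_infinity}, hence for \eqref{eq:estimate_vf_whole_space_2}), and one term, $|\nabla K|\,|\nabla V|/(1+V)$, is missing from your list, though it is likewise harmless.
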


The proof is presented in Appendix \ref{app:proofs_assumptions}.

\section{Proof of Theorem \ref{thm:logarithmic_estimates_Stein}}\label{sect:proof_loglog_est}
Here, we prove that in the particular scaling $t \approx \log\log N$, we can pass to the joint limit $t, N \to \infty$. The result is a simple consequence of results in \cite{MR3919409}. \\

We recall that in \cite{MR3919409} the Authors prove that 
\begin{equation}\label{eq:stability_estimate_Dobrushin}
\mathcal{W}_p(\rho_t^N, \rho_t) \leq C \exp(C \exp(Ct))\, \mathcal{W}_p(\rho_0^N, \rho_0) \leq \frac{C\exp(C \exp(Ct))}{N},
\end{equation}
where $C$ depends only on $V$ and $K$. Furthermore, if $\rho_0$ is an absolutely continuous measure, there exists a unique solution $\rho_t$ to \eqref{eq:nonlocal_PDE_Stein} which is an absolutely continuous measure for all $t \in [0,\infty)$ and
\begin{equation}\label{eq:convergence_measures_weak}
\rho_t \to \rho_{\infty} \mbox{ narrowly (as measures)}.
\end{equation}
The target of this Section is to combine \eqref{eq:stability_estimate_Dobrushin} and \eqref{eq:convergence_measures_weak} to prove the Theorem \ref{thm:logarithmic_estimates_Stein}. We will first upgrade the convergence \eqref{eq:convergence_measures_weak}.
\begin{lem}\label{lem:convergence_rho_t_Wasserstein}
Suppose that $\{\rho_t\}$ is an (absolutely contiuous) measure solution to \eqref{eq:nonlocal_PDE_Stein} with initial condition $\rho_0$ such that $\KL(\rho_0|\rho_{\infty}) < \infty$. Then, for all $1 \leq q < p$ we have $\mathcal{W}_q(\rho_t, \rho_{\infty}) \to 0$ when $t \to \infty$.
\end{lem}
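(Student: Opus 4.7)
The plan is to upgrade the narrow convergence recorded in \eqref{eq:convergence_measures_weak} to convergence in $\mathcal{W}_q$. It is classical that narrow convergence together with uniform integrability of $|x|^q$ against the converging family is equivalent to convergence in $\mathcal{W}_q$, so the only missing piece is a uniform-in-time bound on a moment of $\rho_t$ of order strictly greater than $q$; the natural candidate is the $p$-th moment.

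First I would establish that $t \mapsto \KL(\rho_t \,||\, \rho_{\infty})$ is non-increasing along the flow. A formal computation yields
\begin{equation*}
\frac{\diff}{\diff t} \KL(\rho_t \,||\, \rho_{\infty}) = -\int_{\Rd} \bigl(\nabla \rho_t + \rho_t \nabla V\bigr) \cdot K \ast \bigl(\nabla \rho_t + \rho_t \nabla V\bigr) \diff x \le 0,
\end{equation*}
the sign coming from the positive-definiteness of $K$ postulated in Assumption~\ref{ass:general_for_both}. Since $\rho_t$ is absolutely continuous for every $t \ge 0$ (as recalled just before the lemma), this computation can be made rigorous by a standard smooth approximation, or equivalently taken directly from \cite{MR3919409}. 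In any case, $\KL(\rho_t \,||\, \rho_{\infty}) \le \KL(\rho_0 \,||\, \rho_{\infty}) < \infty$ for all $t \ge 0$.

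Next I would convert this entropy bound into a moment bound via the Donsker--Varadhan variational formula: for any $\alpha \in (0,1)$,
\begin{equation*}
\alpha \int_{\Rd} V \diff \rho_t \le \KL(\rho_t \,||\, \rho_{\infty}) + \log \int_{\Rd} e^{\alpha V} \diff \rho_{\infty} = \KL(\rho_t \,||\, \rho_{\infty}) - \log Z + \log \int_{\Rd} e^{-(1-\alpha) V(x)} \diff x,
\end{equation*}
and the last integral is finite thanks to the lower bound $V(x) \ge \frac{1}{C}(|x|^p - 1)$ for large $|x|$ from \eqref{eq:growth_V_simple}. Dividing by $\alpha$ and combining with the monotonicity of $\KL$ along the flow yields $\sup_{t \ge 0} \int_{\Rd} V \diff \rho_t < \infty$, whence $\sup_{t \ge 0} \int_{\Rd} |x|^p \diff \rho_t < \infty$ by another appeal to \eqref{eq:growth_V_simple}. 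For every $1 \le q < p$ this uniform $p$-th moment bound provides the required uniform integrability of $|x|^q$ against $\{\rho_t\}_{t \ge 0}$; combined with the narrow convergence \eqref{eq:convergence_measures_weak}, it delivers $\mathcal{W}_q(\rho_t, \rho_{\infty}) \to 0$ as $t \to \infty$.

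The main obstacle is the rigorous justification of the entropy decay: the derivative computation above is only formal and strictly speaking requires a smoothing or approximation argument (or else a direct appeal to the results of \cite{MR3919409}). Once the monotonicity of $\KL(\rho_t \,||\, \rho_{\infty})$ is in place, the Donsker--Varadhan step and the final narrow-to-Wasserstein upgrade are standard.
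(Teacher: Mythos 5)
Your proof is correct and follows the same overall strategy as the paper's: entropy decay along the flow gives a uniform bound on $\KL(\rho_t\,||\,\rho_{\infty})$, this is converted into a uniform $p$-th moment bound on $\rho_t$, and the moment bound supplies the uniform integrability of $|x|^q$ (for $q<p$) needed to upgrade the narrow convergence \eqref{eq:convergence_measures_weak} to $\mathcal{W}_q$-convergence. The one place where you genuinely diverge from the paper is the conversion from the entropy bound to the moment bound. The paper expands $\KL(\rho_t\,||\,\rho_{\infty}) = \int \rho_t\log\rho_t + \int V\rho_t + \log Z$ and then hand-estimates the negative part of $\int\rho_t\log\rho_t$ by splitting the small-density region $\{\rho_t\le 1\}$ into $\{\rho_t\le e^{-|x|^p/\sigma}\}$ and its complement, absorbing the resulting $\sigma\int|x|^p\rho_t$ into $\int V\rho_t$ for $\sigma$ small. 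You instead invoke the Donsker--Varadhan duality $\alpha\int V\diff\rho_t\le \KL(\rho_t\,||\,\rho_{\infty}) + \log\int e^{\alpha V}\diff\rho_{\infty}$ with $\alpha\in(0,1)$, which reduces the problem to the finiteness of $\int e^{-(1-\alpha)V}\diff x$ — guaranteed by the coercivity of $V$ in \eqref{eq:growth_V_simple}. Your route is shorter and more conceptual, at the cost of invoking a named variational formula as a black box; the paper's splitting argument is more elementary and self-contained. Either is acceptable, and the final narrow-plus-uniform-integrability-to-Wasserstein step is the same classical fact in both (the paper cites it and also gives an explicit truncation argument for the $q$-th moment convergence, which you state more abstractly).
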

\begin{proof}
As in \cite{MR3919409}, since we deal with the absolutely continuous solution $\rho_t$, we have inequality
$$
\partial_t \KL(\rho_t|\rho_\infty) + \int_{\R^d}  (\nabla \rho_t + \nabla V \rho_t)\,  K \ast (\nabla \rho_t + \nabla V \rho_t) \diff x   \leq 0
$$
so that $
\KL(\rho_t|\rho_{\infty}) \leq \KL(\rho_0|\rho_{\infty}) < \infty.
$
This means that
$$
\int_{\R^d} \rho_t \log(\rho_t) + V(x) \rho_t \diff x \leq C.
$$ 
By standard arguments (for instance, splitting the set $\{x \in \R^d: \rho_t \leq 1\}$ for two sets: $\{x \in \R^d : \rho_t \leq e^{-|x|^p/\sigma} \}$ and $\{x \in \R^d: e^{-|x|^p/\sigma} \leq \rho_t \leq 1\}$) we have
$$
-\int_{\R^d} \rho_t \log^-(\rho_t) \diff x \leq \int_{\R^d} e^{-|x|^p/(2\sigma)} \diff x + \sigma\, \int_{\R^d} \rho_t \, |x|^p \diff x
$$
where $\log^-$ is the negative part of $\log$. Hence, choosing $\sigma$ small enough, using that $\rho_t$ is a probability measure and growth conditions on $V$ in \eqref{eq:growth_V_simple}, we get
\begin{equation}\label{eq:bounds_solution_Xp}
\sup_{t\in[0,\infty)} \int_{\R^d} \left( \rho_t |\log(\rho_t)| + |x|^p \rho_t \right) \diff x \leq C.
\end{equation} 
This inequality gives tightness of the sequence $\{\rho_t\}$ in \cite{MR3919409} to prove \eqref{eq:convergence_measures_weak} but in our case, it gives us uniform moment estimate which is relevant in the sequel. \\

To prove the lemma, by \cite[Theorem 5.11]{MR3409718}, it is sufficient to prove $\int_{\R^d} |x|^q \rho_t(x) \diff x \to \int_{\R^d} |x|^q \rho_{\infty}(x) \diff x$. Let $T_R$ be the truncation operator defined as 
$$
T_R(y) = \begin{cases}
y &\mbox{ if } |y|\leq R,\\
\frac{y}{|y|}\,R &\mbox{ if } |y| > R,
\end{cases}
$$
so that $|T_R(y)| \leq |y|$ and $|T_R(y)| \leq R$. Hence,
\begin{equation}\label{eq:splitting_the_integral}
\begin{split}
&\left|\int_{\R^d} |x|^q (\rho_t - \rho_{\infty}) \diff x \right| \leq \\
& \qquad \qquad \qquad \leq 
\left|\int_{\R^d} T_R(|x|^q) (\rho_t - \rho_{\infty}) \diff x \right| + 
\left|\int_{\R^d} (|x|^q - T_R(|x|^q)) (\rho_t - \rho_{\infty}) \diff x \right|.
\end{split}
\end{equation}
In the second integral we can restrict to $|x|>R$ so that 
$$
||x|^q - T_R(|x|^q)| \leq 2\, |x|^p \, R^{q-p}.
$$
By \eqref{eq:bounds_solution_Xp} and $\int_{\R^d} |x|^p \rho_{\infty} \diff x < \infty$, we conclude
\begin{equation}\label{eq:tail_estimate_large_x}
\left|\int_{\R^d} (|x|^q - T_R(|x|^q)) (\rho_t - \rho_{\infty}) \diff x \right| \leq C\, R^{q-p}.
\end{equation}
As $T_R(|x|^q)$ is an admissible test function for the narrow convergence, we deduce from \eqref{eq:splitting_the_integral} and \eqref{eq:tail_estimate_large_x}
$$
\limsup_{t \to \infty} \left|\int_{\R^d} |x|^q (\rho_t - \rho_{\infty}) \diff x \right| \leq C\, R^{q-p}.
$$
As $R$ is arbitrary, the proof is concluded. 
\end{proof}
\begin{proof}[Proof of Theorem \ref{thm:logarithmic_estimates_Stein}] 
By the triangle inequality
$$
\mathcal{W}_q(\rho_t^N, \rho_\infty) \leq \mathcal{W}_q(\rho_t^N, \rho_t) + \mathcal{W}_q(\rho_t, \rho_{\infty}).
$$
In view of \eqref{eq:stability_estimate_Dobrushin} and a simple inequality $\mathcal{W}_q(\rho_t^N, \rho_t) \leq \mathcal{W}_p(\rho_t^N, \rho_t)$ (as $q \leq p$)
$$
\mathcal{W}_q(\rho_t^N, \rho_t) \leq \frac{C\exp(C \exp(Ct))}{N}.
$$ 
Hence, if $N= \exp(2\,C \exp(Ct))$ then $\mathcal{W}_q(\rho_t^N, \rho_t) \to 0$ and so, the conclusion follows by Lemma \ref{lem:convergence_rho_t_Wasserstein}.
\end{proof}

\section{The weighted $L^2$ estimate (proof of Theorem \ref{thm:weighted_estimate})}\label{sect:weighted_estimate}
%%%%%%%%%%%%%%%%%% TORUS COM
\iffalse 
\subsection{Estimates on the vector field (the case $\Rd$)}

\begin{lem}\label{lem:est_vf}
Let $\mu$ be a measure. Then, for all $k = 0, 1, ..., m$ we have
$$
 \|\nabla^k K \ast (\nabla \mu + \mu \, \nabla V) \|_{L^{\infty}(\Rd)} \leq C\, \|\mu\|_{\BL^*_m}.
$$
\end{lem}
\begin{proof}
Let $k = 0$. Then,
\begin{multline*}
\|K \ast \nabla \mu\|_{L^{\infty}(\Rd)} = \|\nabla K \ast \nabla \mu\|_{L^{\infty}(\Rd)}= \sup_{x\in\Rd} \left| \int_{\Rd} \nabla K(x-y) \diff \mu(y)\right| \leq \\ 
\leq 
\sup_{x\in\Rd} \| \nabla K(x-\cdot)\|_{\BL_m} \, \| \mu \|_{\BL^*_m} \leq 
\| \nabla K\|_{\BL_m}\, \| \mu \|_{\BL^*_m} \leq \| K\|_{\BL_{m+1}}\, \| \mu \|_{\BL^*_m}.
\end{multline*}
In the same way, we prove that $\|\nabla^k K \ast \nabla \mu\|_{L^{\infty}(\Rd)} \leq \| K\|_{\BL_{m+k}}\, \| \mu \|_{\BL^*_m}$. Concerning the term $\nabla^k K \ast (\mu\,\nabla V)$ we let again $k=0$ so that
\begin{multline*}
\|K \ast (\mu\,\nabla V) \|_{L^{\infty}(\Rd)} = \sup_{x\in\Rd} \left|\int_{\Rd} K(x-y)\,\nabla V(y)\diff \mu(y) \right| \leq \\ \leq 
\sup_{x\in\Rd} \| K(x-\cdot)\,\nabla V(\cdot)\|_{\BL_m}\, \| \mu \|_{\BL^*_m} \leq 
\|K\|_{\BL_m}\, \| V\|_{\BL_{m+1}} \, \| \mu \|_{\BL^*_m}. 
\end{multline*}
Similarly, we prove that $\|\nabla^k K \ast \nabla \mu\|_{L^{\infty}(\Rd)} \leq \|K\|_{\BL_{m+k}}\, \| V\|_{\BL_{m+1}} \, \| \mu \|_{\BL^*_m}$.
\end{proof}
\fi
%%%%%%%%%%%%%%%%%%%%%%%%%

We first present the crucial cancellation lemma which allows to cancel the terms which are linear with respect to $\varphi$. 
\begin{lem}\label{lem:funct_Q} Let $\mathcal{Q}(\varphi) := \left(\int_{\Rd} \rho_{\infty} \, |\varphi|^2 \diff x\right)^{1/2}$ and let
$$
f(\varphi) :=  (\nabla \rho_{\infty}\, \varphi) \ast \nabla K - (\nabla \rho_{\infty} \, \varphi)\ast K \cdot \nabla V  - (\rho_{\infty}\,\varphi)\ast \Delta K + (\rho_{\infty}\,\varphi)\ast\nabla K \cdot \nabla V.
$$
Then, 
\begin{equation}\label{eq:lower_bound_on_the_linear_contribution}
\int_{\Rd} f(\varphi)\,\varphi\, \rho_{\infty} \diff x = \int_{\Rd} (\nabla \varphi \, \rho_{\infty}) \ast K \cdot (\nabla \varphi \, \rho_{\infty}) \diff x \geq 0.
\end{equation} 
In particular, if $\varphi$ solves \eqref{eq:dual_PDE}, then
\begin{equation}\label{eq:time_der_Q}
\partial_t \mathcal{Q}(\varphi) \geq \frac{1}{\mathcal{Q}(\varphi)}\, \int_{\Rd} \rho_{\infty} \, \varphi\, g\, (1+V) \diff x + \frac{1}{\mathcal{Q}(\varphi)} \, \int_{\Rd} \rho_{\infty} \, \varphi\, \nabla \varphi \cdot K \ast (\nabla \mu_t + \mu_t \, \nabla V) \diff x.
\end{equation}
\end{lem}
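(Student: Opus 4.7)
The plan is to establish the identity \eqref{eq:lower_bound_on_the_linear_contribution} by integration by parts, using crucially that $\nabla\rho_\infty = -\rho_\infty\,\nabla V$ (since $\rho_\infty = e^{-V}/Z$). The idea is to start from the right-hand side and unfold it into exactly the four convolution terms making up $f(\varphi)$. Writing
$$
\nabla\varphi\,\rho_\infty \;=\; \nabla(\varphi\,\rho_\infty) + \varphi\,\rho_\infty\,\nabla V,
$$
I substitute this into the quadratic form $\int (\nabla\varphi\,\rho_\infty)\ast K\cdot(\nabla\varphi\,\rho_\infty)\diff x$ and expand, obtaining four double integrals against $K(x-y)$: a "$\nabla$–$\nabla$" term, two cross "$\nabla$–$\nabla V$" terms, and a "$\nabla V$–$\nabla V$" term.

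Next, in each cross term I move the remaining derivative off the factor $(\varphi\,\rho_\infty)$ by integrating by parts in the appropriate variable, using $\nabla_y K(x-y)=-\nabla K(x-y)$ and, in the pure "$\nabla$–$\nabla$" term, integrating by parts twice to produce $\Delta K$. After these manipulations the four contributions should match, term by term, the four convolutions defining $f(\varphi)$, after one replaces $\nabla\rho_\infty$ by $-\rho_\infty\nabla V$ inside $f(\varphi)$ to expose the common structure. This gives \eqref{eq:lower_bound_on_the_linear_contribution} as an equality; the sign $\geq 0$ is then immediate from the positive-definiteness of $K$ stated in Assumption \ref{ass:general_for_both}, applied to the vector-valued test field $\xi := \nabla\varphi\,\rho_\infty$ (which is admissible thanks to the fast decay of $\rho_\infty$ and smoothness of $\varphi$).

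For the evolution inequality \eqref{eq:time_der_Q}, I work with $\mathcal{Q}(\varphi)^2 = \int \rho_\infty\,|\varphi|^2\diff x$. Multiplying \eqref{eq:dual_PDE} by $2\rho_\infty\,\varphi$ and integrating yields
\begin{equation*}
\partial_t\,\mathcal{Q}(\varphi)^2 = 2\!\int_{\Rd}\!\rho_\infty\,\varphi\,\nabla\varphi\cdot K\ast(\nabla\mu_t+\mu_t\nabla V)\diff x + 2\!\int_{\Rd}\!\rho_\infty\,\varphi\,f(\varphi)\diff x + 2\!\int_{\Rd}\!\rho_\infty\,\varphi\,g\,(1+V)\diff x.
\end{equation*}
The middle integral is $\geq 0$ by \eqref{eq:lower_bound_on_the_linear_contribution}; dropping it and dividing by $2\mathcal{Q}(\varphi)$, via $\partial_t\mathcal{Q}(\varphi)=\tfrac{1}{2\mathcal{Q}(\varphi)}\partial_t\mathcal{Q}(\varphi)^2$, gives \eqref{eq:time_der_Q}.

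The main obstacle is the bookkeeping in step one: making sure the signs line up when repeatedly using $\nabla_y K(x-y)=-\nabla K(x-y)$, and checking that the boundary terms truly vanish. Since the dual PDE is solved backwards from a smooth terminal condition and $\rho_\infty$ decays at the rate $e^{-V}$ with the growth on $V$ assumed in \eqref{eq:growth_V_simple}, products of the form $\varphi\,\rho_\infty$, $\nabla\varphi\,\rho_\infty$, and their first derivatives are integrable and decaying, so the integrations by parts are legitimate; a short density/approximation argument can be invoked if one wants to be fully rigorous, but there is nothing deeper at stake.
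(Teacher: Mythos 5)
Your proposal is correct and uses essentially the same approach as the paper: the key identity $\nabla\rho_\infty = -\rho_\infty\nabla V$ combined with integration by parts against the convolution kernel, and then differentiating $\mathcal{Q}^2$ along the flow. The only cosmetic difference is that you unfold the quadratic form $\int(\nabla\varphi\,\rho_\infty)\ast K\cdot(\nabla\varphi\,\rho_\infty)\,\diff x$ into the four terms of $f(\varphi)$ (right-to-left), whereas the paper groups the four terms of $\int f(\varphi)\,\varphi\,\rho_\infty\,\diff x$ into two pieces $I_1,I_2$ and integrates by parts to collapse them (left-to-right).
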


\begin{proof}[Proof of Lemma \ref{lem:funct_Q}]
The most important observation is that $- \rho_{\infty} \nabla V = \nabla \rho_{\infty}$. Hence,
\begin{equation*}
\begin{split}
\int_{\Rd} f(\varphi)\,\varphi\, \rho_{\infty} \diff x = & 
\int_{\Rd} (\nabla \rho_{\infty}\, \varphi) \ast \nabla K \, \varphi \, \rho_{\infty} + (\nabla \rho_{\infty} \, \varphi)\ast K \cdot \nabla \rho_{\infty} \, \varphi \diff x \\  &- \int_{\Rd}  (\rho_{\infty}\,\varphi)\ast \Delta K \, \varphi \, \rho_{\infty} + (\rho_{\infty}\,\varphi)\ast\nabla K \cdot \nabla \rho_{\infty} \, \varphi \diff x =: I_1 + I_2. 
\end{split}
\end{equation*}
We observe that $I_1 = -\int_{\Rd} (\nabla \rho_{\infty}\, \varphi) \ast  K \cdot \nabla \varphi \, \rho_{\infty} \diff x$ by integrating by parts in the first term in $I_1$. Similarly $I_2 = \int_{\Rd}  (\rho_{\infty}\,\varphi)\ast \nabla K \cdot \nabla \varphi \, \rho_{\infty} \diff x$. Now, by standard properties of convolutions,  $(\rho_{\infty}\,\varphi)\ast \nabla K = (\nabla \rho_{\infty}\,\varphi)\ast K + ( \rho_{\infty}\,\nabla \varphi)\ast K$ so that summing $I_1 + I_2$ we conclude the proof of \eqref{eq:lower_bound_on_the_linear_contribution} (the nonnegativity follows by the positive definiteness of $K$). Concerning \eqref{eq:time_der_Q}, we observe that differentiating in time and using PDE \eqref{eq:dual_PDE}
\begin{multline*}
\partial_t \mathcal{Q}(\varphi) \, \mathcal{Q}(\varphi) = \\ =
\int_{\Rd} \rho_{\infty} \, \varphi\, f(\varphi) \diff x  +  \int_{\Rd} \rho_{\infty} \, \varphi\, g\, (1+V) \diff x + \int_{\Rd} \rho_{\infty} \, \varphi\, \nabla \varphi \cdot K \ast (\nabla \mu_t + \mu_t \, \nabla V) \diff x
\end{multline*}
so that \eqref{eq:time_der_Q} follows directly from \eqref{eq:lower_bound_on_the_linear_contribution}.
\end{proof}

\begin{proof}[Proof of Theorem \ref{thm:weighted_estimate}]
Integrating \eqref{eq:time_der_Q} in time and using $\varphi(T,x) = 0$ we deduce 
\begin{multline*}
\mathcal{Q}(\varphi(t,\cdot)) \leq  \int_t^T \frac{1}{\mathcal{Q}(\varphi(s,\cdot))} \left|\int_{\Rd} \rho_{\infty}(x) \, \varphi(s,x)\, g(s,x) \, (1+V(x)) \diff x \right| \diff s + \\ + \int_t^T \frac{1}{\mathcal{Q}(\varphi(s,\cdot))}  \left|\int_{\Rd} \rho_{\infty}(x) \, \varphi(s,x)\, \nabla \varphi(s,x) \cdot K \ast (\nabla \mu_s + \mu_s \, \nabla V) \diff x\right| \diff s =: J_1 + J_2.
\end{multline*}
\underline{Estimate on $J_1$.} We use H{\"o}lder inequality to obtain
\begin{multline*}
\frac{1}{\mathcal{Q}(\varphi(s,\cdot))} \left|\int_{\Rd} \rho_{\infty}(x) \, \varphi(s,x)\, g(s,x) \, (1+V(x)) \diff x \right| \leq \left(\int_{\Rd}
\rho_{\infty}(x) \, g^2(s,x) \, (1+V(x))^2 \diff x \right)^{1/2} \\
\leq \|\rho_{\infty}\,(1+V)^2\|_{L^1(\Rd)}^{1/2} \, \|g(s,\cdot)\|_{L^{\infty}(\Rd)} \leq C\,\|g(s,\cdot)\|_{L^{\infty}(\Rd)}
\end{multline*}
so that $J_1$ can be estimated by $C\,\int_t^T \|g(s,\cdot)\|_{L^{\infty}(\R^d)} \diff s$.\\

\underline{Estimate on $J_2$.} We write $
\varphi\,\nabla \varphi = \frac{1}{2}\nabla \varphi^2$ and integrate by parts to get two terms:
\begin{multline*}
\int_t^T \frac{1}{\mathcal{Q}(\varphi(s,\cdot))}  \left|\int_{\Rd} \nabla \rho_{\infty}(x) \, \varphi^2(s,x) \cdot K \ast (\nabla \mu_s + \mu_s \, \nabla V) \diff x\right| \diff s \, + \\
+ \int_t^T \frac{1}{\mathcal{Q}(\varphi(s,\cdot))}  \left|\int_{\Rd}  \rho_{\infty}(x) \, \varphi^2(s,x) \, \nabla K \ast (\nabla \mu_s + \mu_s \, \nabla V) \diff x\right| \diff s.
\end{multline*}
Using $\nabla \rho_{\infty} = -\rho_{\infty}\,\nabla V$, we can estimate it by
$$
\int_t^T {\mathcal{Q}(\varphi(s,\cdot))} \, \left( \left\| \nabla V \cdot K \ast (\nabla \mu_s + \mu_s \, \nabla V) \right\|_{L^{\infty}(\Rd)} + \left\| \nabla K \ast (\nabla \mu_s + \mu_s \, \nabla V) \right\|_{L^{\infty}(\Rd)} \right) \diff s.  
$$
The $L^{\infty}$ norms above can be bounded by $C\,\|\mu_s\|_{\BL^*_V}$ using Lemma \ref{lem:est_vf_Rd} so that we obtain
$$
\mathcal{Q}(\varphi(t,\cdot)) \leq C\, \int_t^T \|g(s,\cdot)\|_{ L^{\infty}(\Rd)} \diff s + C\, \int_t^T {\mathcal{Q}(\varphi(s,\cdot))} \, \|\mu_s\|_{\BL^*_V} \diff s. 
$$
Using Lemma \ref{lem:back_gronwall}, we conclude the proof. 
\end{proof}

\section{BL estimates on $\varphi$ and proof of Theorem \ref{thm:main1}}\label{sect:BL_est} The plan is to write explicit solution to \eqref{eq:dual_PDE} and estimate each term separately. Note that for a general transport equation 
$\partial_t \varphi = \nabla \varphi \cdot b(t,x) + c(t,x)$, $\varphi(T,x) = 0$,
the method of characteristics yields the following representation formula
$$
\varphi(t,x) = - \int_t^T c(s, X_{t,s}(x)) \diff s,
$$
where $X_{t,s}(x)$ is the flow of the vector field $b$:
$$
\partial_s X_{t,s}(x) = -b(s,X_{t,s}(x)), \qquad  X_{t,t}(x) = x.
$$
Therefore, the solution to \eqref{eq:dual_PDE} can be written as
\begin{equation}\label{eq:sol_dual_PDE}
\begin{split}
\varphi(t,x) = & -\int_t^T(\nabla \rho_{\infty}\, \varphi) \ast \nabla K(s,X_{t,s}(x)) \diff s + \int_t^T (\nabla \rho_{\infty} \, \varphi)\ast K \cdot \nabla V(s,X_{t,s}(x)) \diff s \\ &+  \int_t^T (\rho_{\infty}\,\varphi)\ast \Delta K(s,X_{t,s}(x)) \diff s - \int_t^T  (\rho_{\infty}\,\varphi)\ast\nabla K \cdot \nabla V(s,X_{t,s}(x)) \diff s\\
&- \int_t^T g(s,X_{t,s}(x))\,(1+V(X_{t,s}(x))) \diff s,
\end{split}
\end{equation}
where $X_{t,s}(x)$ is the flow of the vector field $-K \ast (\nabla \mu_s + \mu_s \, \nabla V)$:
\begin{equation}\label{eq:ODE_for_the_flow}
\partial_s X_{t,s}(x) = - K \ast (\nabla \mu_s + \mu_s \, \nabla V)(X_{t,s}(x)), \qquad  X_{t,t}(x) = x.
\end{equation}

According to \eqref{eq:duality_estimate_on_measure_solution}, we need to estimate $\frac{\varphi(0,x)}{1+V(x)}$ uniformly with respect to ${g}$. From \eqref{eq:sol_dual_PDE} we have
\begin{align}\label{eq:sol_dual_PDE_case_Rd}
\frac{\varphi(0,x)}{1+V(x)} = & -\int_0^T \frac{(\nabla \rho_{\infty}\, \varphi) \ast \nabla K(s,X_{0,s}(x))}{1+V(x)} \diff s + \int_0^T (\nabla \rho_{\infty} \, \varphi)\ast K \cdot \frac{\nabla V(s,X_{0,s}(x))}{1+V(x)} \diff s \nonumber\\ &+  \int_0^T \frac{(\rho_{\infty}\,\varphi)\ast \Delta K(s,X_{0,s}(x))}{1+V(x)} \diff s - \int_0^T  (\rho_{\infty}\,\varphi)\ast\nabla K \cdot \frac{\nabla V(s,X_{0,s}(x))}{1+V(x)} \diff s\nonumber\\
&+ \int_0^T g(s,X_{0,s}(x))\, \frac{1+V(X_{0,s}(x))}{1+V(x)} \diff s,
\end{align}
First, we will need a lemma on quantities appearing in \eqref{eq:sol_dual_PDE_case_Rd}.
\begin{lem}\label{lem:quotients_v} Let $\{\mu_s\}_{s\in[0,T]}$ be the family of measures and let $X_{t,s}$ be defined by \eqref{eq:ODE_for_the_flow}. Then, there exists a constant $C$ depending only on $K$ and $V$ such that
$$
\left|\nabla X_{0,s}(x) \right| , \, \left|\frac{\nabla V(X_{0,s}(x))}{ 1+ V(x)}\right|, \, \left| \frac{V(X_{0,s}(x))}{1+ V(x)} \right|, \, \left|\frac{ \nabla^2 V(X_{0,s}(x))}{1+ V(x)} \right| \leq C\, e^{C\,\int_0^s \|\mu_u\|_{\BL^*_V} \diff u}. 
$$
\end{lem}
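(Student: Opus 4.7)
The plan is to treat the four quantities one at a time, relying on the two estimates \eqref{eq:estimate_vf_whole_space_1}--\eqref{eq:estimate_vf_whole_space_2} from Lemma \ref{lem:est_vf_Rd} and on the pointwise growth bounds of $V$ coming from Assumptions \ref{ass:general_for_both} and \ref{ass:potential_kernel}. The differential inequality for $V(X_{0,s}(x))$ will do most of the work, and the bounds for $\nabla V$ and $\nabla^2 V$ will then follow from purely algebraic comparisons.

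First I would handle $|\nabla X_{0,s}(x)|$. Differentiating \eqref{eq:ODE_for_the_flow} with respect to $x$ yields the linear ODE
$$
\partial_s \nabla X_{0,s}(x) = -\bigl(\nabla K \ast (\nabla \mu_s + \mu_s \nabla V)\bigr)(X_{0,s}(x)) \, \nabla X_{0,s}(x), \qquad \nabla X_{0,0}(x) = I.
$$
By \eqref{eq:estimate_vf_whole_space_1}, the matrix coefficient is bounded in $L^{\infty}(\R^d)$ by $C\,\|\mu_s\|_{\BL^*_V}$, so Gronwall delivers $|\nabla X_{0,s}(x)| \leq \exp(C \int_0^s \|\mu_u\|_{\BL^*_V}\,du)$.

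Next I would control $V(X_{0,s}(x))/(1+V(x))$ via the chain rule
$$
\frac{d}{ds} V(X_{0,s}(x)) = -\nabla V(X_{0,s}(x)) \cdot \bigl(K \ast (\nabla \mu_s + \mu_s \nabla V)\bigr)(X_{0,s}(x)).
$$
The crucial point is that the right-hand side is exactly the quantity estimated in \eqref{eq:estimate_vf_whole_space_2}, so it is bounded in absolute value by $C\,\|\mu_s\|_{\BL^*_V}$ uniformly in $x$. Integrating in $s$ gives the \emph{linear} growth
$$
V(X_{0,s}(x)) \leq V(x) + C\int_0^s \|\mu_u\|_{\BL^*_V}\,du,
$$
and dividing by $1+V(x)$ and using $1+t \leq e^t$ produces the desired exponential bound for $V(X_{0,s}(x))/(1+V(x))$.

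Finally, for the two quantities involving $\nabla V$ and $\nabla^2 V$, the strategy is to reduce them to the preceding step by the algebraic pointwise inequalities
$$
|\nabla V(y)| + |\nabla^2 V(y)| \leq C\,(1+V(y)), \qquad y \in \R^d.
$$
For $|y| > R$ this follows from Assumption \ref{ass:general_for_both}: $|\nabla V(y)| \leq C(1+|y|^{p-1}) \leq C(1+|y|^p) \leq C(1+V(y))$ since $p-1 \leq p$, and $|\nabla^2 V(y)| \leq C(1+|y|^{p-2})$ is even bounded when $p \leq 2$ (which is exactly the restriction imposed by Assumption \ref{ass:potential_kernel}, cf.\ Lemma \ref{lem:example_ass_satisfied}). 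On the complementary compact set $\{|y| \leq R\}$, smoothness of $V$ makes both $|\nabla V|$ and $|\nabla^2 V|$ bounded, and $1+V \geq 1$, so the inequalities hold there too. Combining with the previous step,
$$
\frac{|\nabla V(X_{0,s}(x))|}{1+V(x)} \leq C\,\frac{1+V(X_{0,s}(x))}{1+V(x)} \leq C\,e^{C\int_0^s \|\mu_u\|_{\BL^*_V}\,du},
$$
and identically for $\nabla^2 V$. The only real subtlety is verifying that the restriction $p \leq 2$ is exactly what guarantees these global comparisons; apart from that, the proof is a direct application of the ODE/Gronwall machinery together with the vector-field bounds already established in Lemma \ref{lem:est_vf_Rd}.
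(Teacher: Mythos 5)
Your proof is correct, and for the three ratios involving $V$, $\nabla V$, $\nabla^2 V$ it takes a genuinely different route from the paper. The paper's argument works directly on the trajectory: from the integral form of \eqref{eq:ODE_for_the_flow} and \eqref{eq:estimate_vf_whole_space_1} it gets $|X_{0,s}(x)| \leq |x| + C\int_0^s \|\mu_u\|_{\BL^*_V}\,du$, then bounds $|f(X_{0,s}(x))|$ for each $f\in\{V,|\nabla V|,|\nabla^2 V|\}$ via the growth exponent $q\in\{p,p-1,p-2\}$, splits $|X_{0,s}(x)|^q\lesssim |x|^q + (\int_0^s \|\mu_u\|\,du)^q$, converts the second term to an exponential through the crude inequality $t^q \leq C e^{Ct}$, and absorbs the first into the weight using the boundedness of $|x|^q/(1+V(x))$. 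You instead differentiate $V$ along the flow and observe that the resulting quantity $\nabla V\cdot K*(\nabla\mu_s+\mu_s\nabla V)$ evaluated along the trajectory is controlled by \eqref{eq:estimate_vf_whole_space_2}, yielding the sharper \emph{linear} growth $V(X_{0,s}(x)) \leq V(x) + C\int_0^s \|\mu_u\|\,du$ before converting via $1+t\leq e^t$; you then reduce $\nabla V$ and $\nabla^2 V$ to $V$ through the pointwise comparison $|\nabla V|+|\nabla^2 V| \leq C(1+V)$. Your route is arguably cleaner (it actually uses \eqref{eq:estimate_vf_whole_space_2}, which the paper proves in Lemma \ref{lem:est_vf_Rd} but does not invoke in this proof, and it avoids the crude $t^q\leq Ce^{Ct}$ step), while the paper's is more template-like, applying the same mechanism to all three quantities at once. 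One small correction: the algebraic comparison $|\nabla^2 V(y)| \leq C(1+V(y))$ follows from \eqref{eq:growth_V_simple} for every $p>0$, not only for $p\leq 2$; what $p\leq 2$ buys you is the stronger fact that $\nabla^2 V$ is globally bounded, but that extra strength is not needed for the lemma.
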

\begin{proof}
Note that
\begin{equation}\label{eq:ODE_explicitly}
X_{0,s}(x) = x - \int_0^s K \ast (\nabla \mu_u + \mu_u \, \nabla V)(X_{0,u}(x)) \diff u
\end{equation}
so that in particular
$$
\nabla X_{0,s}(x) = \mathbb{I}_d - \int_0^s \nabla K \ast (\nabla \mu_u + \mu_u \, \nabla V)(X_{0,u}(x)) \cdot \nabla X_{0,u}(x) \diff u,
$$
where $\mathbb{I}_d$ is the identity matrix. As $\left\|\nabla K \ast (\nabla \mu_u + \mu_u \, \nabla V)\right\|_{L^{\infty}(\R^d)} \leq C\, \|\mu_u\|_{\BL^*_V}$ (Lemma \ref{lem:est_vf_Rd}), the estimate on $\nabla X_{0,s}$ follows by Gr{\"o}nwall lemma.\\

We now proceed to the proof of the estimates involving potential $V$. We notice that the second term in \eqref{eq:ODE_explicitly} can be estimated by $\int_0^s \|\mu_u\|_{\BL^*_V} \diff u$ (Lemma \ref{lem:est_vf_Rd}). Now let $f$ be one of the functions $V$, $|\nabla V|$, $|\nabla^2 V|$ so that the target is to estimate $\frac{f(X_{0,s}(x))}{1+V(x)}$. We want to use the growth conditions \eqref{eq:growth_V_simple}. If $f$ happens to be bounded, the proof is concluded immediately. Otherwise, there exists $q \in \{p, p-1, p-2\}$, $q \geq 0$ such that
$$
|f(X_{0,s}(x))| \leq C\,(1+ |X_{0,s}(x)|^q) \leq C\, \left(1+ |x|^q + \left|\int_0^s K \ast (\nabla \mu_u + \mu_u \, \nabla V)(X_{0,u}(x)) \diff u \right|^q \right).
$$
Using Lemma \ref{lem:est_vf_Rd} and simple inequality $|x|^q \leq C\, e^{C\,|x|}$ we get
\begin{multline*}
\left|\int_0^s K \ast (\nabla \mu_u + \mu_u \, \nabla V)(X_{0,u}(x)) \diff u \right|^q \\ 
\leq \left|\int_0^s \left\| K \ast (\nabla \mu_u + \mu_u \, \nabla V) \right\|_{L^{\infty}(\R^d)} \diff u  \right|^q 
\leq C\, e^{C\,\int_0^s \|\mu_u\|_{\BL^*_V} \diff u},
\end{multline*}
It follows that
$$
\left|\frac{f(X_{0,s}(x))}{1+V(x)}\right|  \leq C+C\,\frac{|x|^q}{1+V(x)}  + C\, \frac{e^{C\,\int_0^s \|\mu_u\|_{\BL^*_V} \diff u}}{1+V(x)} \leq C \frac{|x|^q}{1+V(x)} + C\, e^{C\,\int_0^s \|\mu_u\|_{\BL^*_V} \diff u}.
$$
To conclude the proof, it remains to observe that because $0 \leq q \leq p$, the term $\frac{|x|^q}{1+V(x)}$ is bounded due to the growth conditions \eqref{eq:growth_V_simple}. 
\end{proof}
We proceed to the estimates on $\varphi$.
\begin{lem}\label{lem:Linf_weighted_dual_problem}
Let $\varphi$ be a solution to \eqref{eq:dual_PDE} with $g$ and $T>0$ fixed. Then, there exists a constant $C$ depending only on $V$ and $K$ such that
    $$
\left\|\frac{\varphi(0,\cdot)}{1+V(\cdot)}\right\|_{L^{\infty}(\R^d)} \leq C\,(T+1)\, \|g\|_{L^1(0,T; L^{\infty}(\R^d))}\, e^{C\,\int_0^T \|\mu_u\|_{\BL^*_V} \diff u}.
    $$
\end{lem}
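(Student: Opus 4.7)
The plan is to start from the explicit representation \eqref{eq:sol_dual_PDE_case_Rd}, divide each of the five integrands by $1+V(x)$, and estimate them pointwise in $x$. Four terms are convolutions evaluated at the characteristic $X_{0,s}(x)$, and the last one is the source contribution involving $g$; after uniform bounds in $x$ are obtained one integrates in $s$ and closes using Theorem \ref{thm:weighted_estimate}.

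For the four convolution terms, Young's inequality gives an $L^\infty_x$ bound by the product of the $L^1$ norm of the density factor and the $L^\infty$ norm of the appropriate derivative of $K$ (which is finite by Assumption \ref{ass:potential_kernel}). Since $\nabla \rho_\infty = -\rho_\infty \nabla V$, both $\|\rho_\infty \varphi(s,\cdot)\|_{L^1}$ and $\|\nabla \rho_\infty \varphi(s,\cdot)\|_{L^1} = \|\rho_\infty\, \nabla V\, \varphi(s,\cdot)\|_{L^1}$ can be bounded via Cauchy--Schwarz against $\rho_\infty^{1/2}$ by $C\,\mathcal{Q}(\varphi(s,\cdot))$; the Gaussian-type decay of $\rho_\infty$ combined with the polynomial growth \eqref{eq:growth_V_simple} of $V$ ensures that the constant $C$ depends only on $V$.

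The remaining pointwise factors $\nabla V(X_{0,s}(x))/(1+V(x))$ in the second and fourth terms, and $(1+V(X_{0,s}(x)))/(1+V(x))$ in the $g$-term, are exactly the quantities controlled by Lemma \ref{lem:quotients_v}, which gives bounds of the form $C \exp\bigl(C\int_0^s \|\mu_u\|_{\BL^*_V} \diff u\bigr)$. The first and third terms carry no such $V$-weighted factor, so they only need the $L^1$--$L^\infty$ bound (we harmlessly insert the same exponential there for uniformity). Collecting all contributions one arrives at
$$
\left|\frac{\varphi(0,x)}{1+V(x)}\right| \leq C \int_0^T \Bigl(\mathcal{Q}(\varphi(s,\cdot)) + \|g(s,\cdot)\|_{L^\infty(\R^d)}\Bigr)\, e^{C\int_0^s \|\mu_u\|_{\BL^*_V} \diff u} \diff s.
$$

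To finish, I would insert the bound from Theorem \ref{thm:weighted_estimate}, namely $\mathcal{Q}(\varphi(s,\cdot)) \leq C\, \|g\|_{L^1(s,T;L^\infty(\R^d))}\, e^{C\int_s^T \|\mu_u\|_{\BL^*_V} \diff u}$, and merge the two exponentials into a single factor $\exp\bigl(C\int_0^T \|\mu_u\|_{\BL^*_V} \diff u\bigr)$ that can be pulled outside the $s$-integral. The integrand involving $\mathcal{Q}$ then produces the factor $CT\,\|g\|_{L^1(0,T;L^\infty)}$, while the integrand involving $g$ directly produces $\|g\|_{L^1(0,T;L^\infty)}$, giving the claimed prefactor $C(T+1)$. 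I do not expect any genuine obstacle here: the only point that requires care is keeping the $x$-dependence properly tracked when applying Young's inequality, so that the factor $1/(1+V(x))$ ends up attached to the $\nabla V(X_{0,s}(x))$ that is evaluated at the flow, rather than to something inside the convolution.
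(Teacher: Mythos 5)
Your proposal follows essentially the same route as the paper's proof: start from the representation formula \eqref{eq:sol_dual_PDE_case_Rd}, use Young's inequality together with the Cauchy--Schwarz bounds $\|\rho_\infty\varphi\|_{L^1},\|\nabla\rho_\infty\varphi\|_{L^1}\leq C\,\mathcal{Q}(\varphi)$, control the $V$-weighted quotients via Lemma \ref{lem:quotients_v}, and close with Theorem \ref{thm:weighted_estimate}. The only cosmetic difference is that you keep an explicit factor $e^{C\int_0^s\|\mu_u\|_{\BL^*_V}\diff u}$ inside the time integral before merging, whereas the paper bounds each piece by $e^{C\int_0^T\|\mu_u\|_{\BL^*_V}\diff u}$ immediately; both yield the same $C(T+1)$ prefactor.
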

\begin{proof} Using formula \eqref{eq:sol_dual_PDE_case_Rd} and estimating $1 \leq 1 + V(x)$, we have
\begin{equation*}
\begin{split}
\left\|\frac{\varphi(0,\cdot)}{1+V(\cdot)}\right\|_{L^{\infty}(\R^d)} \leq & \int_0^T \|(\nabla \rho_{\infty}\, \varphi) \ast \nabla K(s,\cdot)\|_{L^{\infty}(\R^d)}  \diff s \\  &+ \int_0^T \|(\nabla \rho_{\infty} \, \varphi)\ast K(s,\cdot) \|_{L^{\infty}(\R^d)} \, \left\|\frac{\nabla V(s,X_{0,s}(\cdot))}{1+V(\cdot)}\right\|_{L^{\infty}(\R^d)} \diff s \\ &+  \int_0^T \|{(\rho_{\infty}\,\varphi)\ast \Delta K(s,\cdot)} \|_{L^{\infty}(\R^d)} \diff s \\ 
&+ \int_0^T  \|(\rho_{\infty}\,\varphi)\ast\nabla K(s,\cdot)\|_{L^{\infty}(\R^d)} \, \left\|\frac{\nabla V(s,X_{0,s}(\cdot))}{1+V(\cdot)}\right\|_{L^{\infty}(\R^d)} \diff s\\
&+ \int_0^T \left\|g(s,\cdot)\right\|_{L^{\infty}(\R^d)} \, \left\|\frac{1+V(X_{0,s}(\cdot))}{1+V(\cdot)}\right\|_{L^{\infty}(\R^d)} \diff s.
\end{split}
\end{equation*}
Note that
\begin{equation}\label{eq:Linf_est_aux_est1}
\|\rho_{\infty}(\cdot)\,\varphi(s,\cdot)\|_{L^{1}(\Rd)} \leq \|\rho_{\infty}\|_{L^{1}(\Rd)}^{1/2}\, \mathcal{Q}(\varphi(s,\cdot)) \leq C\, \mathcal{Q}(\varphi(s,\cdot)),
\end{equation}
\begin{equation}\label{eq:Linf_est_aux_est2}
\|\nabla \rho_{\infty}(\cdot)\,\varphi(s,\cdot)\|_{L^{1}(\Rd)} \leq \|\rho_{\infty}\, |\nabla V|^2 \|_{L^{1}(\Rd)}^{1/2}\, \mathcal{Q}(\varphi(s,\cdot)) \leq  C\, \mathcal{Q}(\varphi(s,\cdot)),
\end{equation}
so that due to Theorem \ref{thm:main1} we have
\begin{align*}
\max(\|\rho_{\infty}(\cdot)\,\varphi(s,\cdot)\|_{L^{1}(\Rd)}, \|\nabla \rho_{\infty}(\cdot)\,\varphi(s,\cdot)\|_{L^{1}(\Rd)} ) & \leq C\, \mathcal{Q}(\varphi(s,\cdot)) \\ &\leq C\,\|g\|_{L^1(0,T; L^{\infty}(\Rd))} \, e^{C\,\int_0^T \|\mu_u\|_{\BL^*_V} \diff u}.
\end{align*}
By Young's convolutional inequality, for any function $f:\R^d \to \R$,
\begin{equation}\label{eq:estimates_products_rho_phi}
\begin{split}
\|(\rho_{\infty}\,\varphi)\ast f(s,\cdot)\|_{L^{\infty}(\Rd)}, \|(\nabla \rho_{\infty}\,\varphi)&\ast f(s,\cdot)\|_{L^{\infty}(\Rd)} \leq \\ &\leq
C\, \|f\|_{L^{\infty}(\Rd)}\,\|g\|_{L^1(0,T; L^{\infty}(\Rd))} \, e^{C\,\int_0^T \|\mu_u\|_{\BL^*_V} \diff u}.
\end{split} 
\end{equation} 
Hence, applying it with $f = K, \Delta K, \partial_{x_i} K$ (for all $i = 1, ..., d$) and using Lemma \ref{lem:quotients_v} for the potential term, we conclude the proof.
\end{proof} 

\begin{lem}\label{lem:varphi_estimate_grad}
Let $\varphi$ be a solution to \eqref{eq:dual_PDE} with $g$ and $T>0$ fixed. Then, there exists a constant $C$ depending only on $V$ and $K$ such that
\begin{equation}\label{eq:estimate_grad_dual_sol}
\left\|\nabla \frac{\varphi(0,\cdot)}{1+V(\cdot)}\right\|_{L^{\infty}(\R^d)} \leq C\, (T+1)\, \|g\|_{L^1(0,T; \BL(\R^d))}\, e^{C\,\int_0^T \|\mu_u\|_{\BL^*_V} \diff u}
\end{equation}    
\end{lem}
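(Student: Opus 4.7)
The plan is to differentiate the explicit representation \eqref{eq:sol_dual_PDE_case_Rd} in the spatial variable $x$ and to control each of the resulting terms using the same ingredients as in Lemma \ref{lem:Linf_weighted_dual_problem}, augmented by the pointwise bounds of Lemma \ref{lem:quotients_v}. There are three families of contributions to inspect: (a) $\nabla$ falls on a convolution $(\rho_{\infty}\varphi)\ast F$ or $(\nabla\rho_{\infty}\varphi)\ast F$ composed with the flow $X_{0,s}(x)$, producing an extra factor $\nabla X_{0,s}(x)$ and raising the order of the kernel by one (so $\nabla K \mapsto \nabla^2 K$, $K\mapsto \nabla K$, and, critically, $\Delta K \mapsto \nabla\Delta K$, which is exactly why we assume $K\in W^{3,\infty}(\Rd)$); (b) $\nabla$ falls on the outer weight $\tfrac{1}{1+V(x)}$, producing $-\tfrac{\nabla V(x)}{(1+V(x))^2}$; (c) $\nabla$ falls on an inner composite factor $\tfrac{\nabla V(X_{0,s}(x))}{1+V(x)}$ or $\tfrac{1+V(X_{0,s}(x))}{1+V(x)}$ appearing in the second, fourth and fifth integrals of \eqref{eq:sol_dual_PDE_case_Rd}.

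For (a), Young's inequality together with the $L^1$-estimates \eqref{eq:Linf_est_aux_est1}--\eqref{eq:Linf_est_aux_est2}, Theorem \ref{thm:weighted_estimate}, and the bound $\|\nabla X_{0,s}\|_{L^{\infty}(\Rd)} \leq C e^{C\int_0^s \|\mu_u\|_{\BL^*_V} \diff u}$ from Lemma \ref{lem:quotients_v} reproduce estimates of the same shape as \eqref{eq:estimates_products_rho_phi}. For (b), the observation that $\tfrac{|\nabla V(x)|}{1+V(x)}$ is uniformly bounded (this is a direct consequence of \eqref{eq:growth_V_simple} in the range $p\in(0,2]$ enforced by Assumption \ref{ass:potential_kernel} via Lemma \ref{lem:example_ass_satisfied}) reduces each such contribution to a term already controlled in Lemma \ref{lem:Linf_weighted_dual_problem}. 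For (c), the product rule generates factors of the form $\tfrac{\nabla^2 V(X_{0,s})\cdot\nabla X_{0,s}}{1+V(x)}$ and $\tfrac{\nabla V(X_{0,s})}{1+V(x)}\cdot \tfrac{\nabla V(x)}{1+V(x)}$, all of which are bounded uniformly by Lemma \ref{lem:quotients_v} and the boundedness of $\tfrac{|\nabla V|}{1+V}$.

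The source term requires one extra comment. Differentiating $g(s,X_{0,s}(x))\,\tfrac{1+V(X_{0,s}(x))}{1+V(x)}$ produces $\nabla g(s,X_{0,s}(x))\cdot \nabla X_{0,s}(x)$ times the bounded quotient, which is estimated by $|g(s,\cdot)|_{\Lip}\leq \|g(s,\cdot)\|_{\BL}$, together with the contribution from differentiating the weight, which is treated exactly as in (b) and (c). This is precisely the source of the $\BL$-norm of $g$ on the right-hand side of \eqref{eq:estimate_grad_dual_sol}, as opposed to the weaker $L^{\infty}$-norm sufficient in Lemma \ref{lem:Linf_weighted_dual_problem}. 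Collecting all contributions, integrating over $s\in(0,T)$ and factoring out $\|g\|_{L^1(0,T;\BL(\Rd))}\,e^{C\int_0^T\|\mu_u\|_{\BL^*_V}\diff u}$ yields a leading $T+1$ prefactor exactly as in the previous lemma.

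The main obstacle is not any single estimate but the bookkeeping: each of the five integrals in \eqref{eq:sol_dual_PDE_case_Rd} splits into several terms under $\nabla$, and one must verify that every product of (kernel derivative $\times$ flow derivative $\times$ $V$-quotient) is covered by the hypotheses $K\in W^{3,\infty}$ and \eqref{eq:add_cond_growth_at_infinity}. No new analytic ingredient beyond Lemma \ref{lem:est_vf_Rd}, Lemma \ref{lem:quotients_v} and Theorem \ref{thm:weighted_estimate} is required.
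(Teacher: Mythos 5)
Your proposal is correct and follows essentially the same route as the paper: the paper first writes $\nabla\frac{\varphi(0,\cdot)}{1+V} = \frac{\nabla\varphi(0,\cdot)}{1+V} - \frac{\varphi(0,\cdot)}{1+V}\cdot\frac{\nabla V}{1+V}$ (your family (b)), handles the second term by Lemma \ref{lem:Linf_weighted_dual_problem} and boundedness of $\frac{\nabla V}{1+V}$, and then differentiates \eqref{eq:sol_dual_PDE} term by term to estimate $\frac{\nabla\varphi(0,\cdot)}{1+V}$, invoking Lemma \ref{lem:quotients_v} for the $\nabla X_{0,s}$ and $V$-quotient factors, \eqref{eq:estimates_products_rho_phi} for the convolution factors, and $\|\nabla g\|_{L^\infty}\leq |g|_{\Lip}$ for the source — all exactly as you describe, with the same identification of $\nabla\Delta K$ as the reason for $K\in W^{3,\infty}$.
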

\begin{proof}
Note that
$$
\nabla \frac{\varphi(0,x)}{1+V(x)} = \frac{\nabla \varphi(0,x)}{1+V(x)} - \frac{\varphi(0,x)}{1+V(x)}\, \frac{\nabla V(x)}{1+V(x)}.    
$$
The second term is bounded by Lemma \ref{lem:Linf_weighted_dual_problem} and the assumption on the potential so it is sufficient to estimate $\frac{\nabla \varphi(0,x)}{1+V(x)}$. Differentiating each term in \eqref{eq:sol_dual_PDE} with respect to $x$ at $t=0$, dividing by $(1+V(x))$ and estimating $1\leq 1+V(x)$ when there are no terms with the potential $V$ in the numerator, we get
\begin{equation*}\label{eq:sol_dual_PDE_der_weighted}
\begin{split}
&\left\|\frac{\nabla \varphi(0,\cdot)}{1+V(\cdot)}\right\|_{L^{\infty}(\R^d)} \leq   \int_0^T \left\|(\nabla \rho_{\infty}\, \varphi) \ast \nabla^2 K(s,\cdot)\right\|_{L^{\infty}(\R^d)}\, \left\| \nabla X_{0,s}\right\|_{L^{\infty}(\R^d)}   \diff s \\ 
& \qquad + \int_0^T \left\|(\nabla \rho_{\infty} \, \varphi)\ast \nabla K(s,\cdot)\right\|_{L^{\infty}(\R^d)}\,  \left\| \frac{\nabla V(s,X_{0,s}(\cdot))}{1+V(\cdot)}\right\|_{L^{\infty}(\R^d)}  \, \left\| \nabla X_{0,s}\right\|_{L^{\infty}(\R^d)}  \diff s
\\ 
& \qquad + \int_0^T \left\|(\nabla \rho_{\infty} \, \varphi)\ast  K(s,\cdot)\right\|_{L^{\infty}(\R^d)}\,  \left\| \frac{\nabla^2 V(s,X_{0,s}(\cdot))}{1+V(\cdot)}\right\|_{L^{\infty}(\R^d)}  \, \left\| \nabla X_{0,s}\right\|_{L^{\infty}(\R^d)}  \diff s\
\\
& \qquad +  \int_0^T \left\|(\rho_{\infty}\,\varphi)\ast \nabla \Delta K(s,\cdot) \right\|_{L^{\infty}(\R^d)} \, \left\| \nabla X_{0,s}\right\|_{L^{\infty}(\R^d)} \diff s\\ 
\end{split}
\end{equation*}
\begin{equation*}\label{eq:sol_dual_PDE_der_weighted}
\begin{split}
& \qquad +\int_0^T  \left\|(\rho_{\infty}\,\varphi)\ast\nabla^2 K (s,\cdot) \right\|_{L^{\infty}(\R^d)} \, \left\| \frac{\nabla V(s,X_{0,s}(\cdot))}{1+V(\cdot)} \right\|_{L^{\infty}(\R^d)}\, \left\| \nabla X_{0,s}\right\|_{L^{\infty}(\R^d)} \diff s\\
& \qquad +\int_0^T  \left\|(\rho_{\infty}\,\varphi)\ast\nabla K (s,\cdot) \right\|_{L^{\infty}(\R^d)} \, \left\| \frac{\nabla^2 V(s,X_{0,s}(\cdot))}{1+V(\cdot)} \right\|_{L^{\infty}(\R^d)}\, \left\| \nabla X_{0,s}\right\|_{L^{\infty}(\R^d)} \diff s\\
& \qquad + \int_0^T \left\|\nabla g(s,\cdot) \right\|_{L^{\infty}(\R^d)} \, \left\| \frac{1+ V(s,X_{0,s}(\cdot))}{1+V(\cdot)} \right\|_{L^{\infty}(\R^d)} \left\| \nabla X_{0,s}\right\|_{L^{\infty}(\R^d)} \,  \diff s\\
& \qquad + \int_0^T \left\|g(s,\cdot) \right\|_{L^{\infty}(\R^d)} \, \left\| \frac{\nabla V(s,X_{0,s}(\cdot))}{1+V(\cdot)} \right\|_{L^{\infty}(\R^d)} \left\| \nabla X_{0,s}\right\|_{L^{\infty}(\R^d)} \,  \diff s.
\end{split}
\end{equation*}
Now, we obtain \eqref{eq:estimate_grad_dual_sol} directly from Lemma \ref{lem:quotients_v}, estimates \eqref{eq:estimates_products_rho_phi} and the fact that $\left\|\nabla g(s,\cdot) \right\|_{L^{\infty}(\R^d)} \leq \left|g(s,\cdot)\right|_{\Lip}$. 
\end{proof}

\begin{proof}[Proof of Theorem \ref{thm:main1}]
Thanks to Lemmas \ref{lem:Linf_weighted_dual_problem} and \ref{lem:varphi_estimate_grad} we know that
$$
\left\| \frac{\varphi(0,\cdot)}{1+V(\cdot)} \right\|_{\BL(\R^d)} \leq C\,(T+1)\, \|g\|_{L^1(0,T; \BL(\R^d))}\, e^{C\,\int_0^T \|\mu_u\|_{\BL^*_V} \diff u}.
$$
where the constant $C$ does not depend on $g$ and $T$. Using duality formula \eqref{eq:duality_estimate_on_measure_solution} and taking supremum over all $g$ such that $\| g\|_{L^1(0,T; \BL(\R^d))} \leq 1$ we obtain
$$
\esssup_{t \in [0,T]} \| \mu_t \|_{\BL^*_V} \leq C\,(T+1)\,  e^{C\,\int_0^T \|\mu_u\|_{\BL^*_V} \diff u}\, \| \mu_0 \|_{\BL^*_V}.
$$
By continuity of the map $[0,T] \ni u \mapsto \mu_u$ in the $\|\cdot\|_{\BL^*_V}$ norm (Lemma \ref{lem:continuity_in_time}), we conclude 
$$
\| \mu_T \|_{\BL^*_V} \leq C\,(T+1)\,  e^{C\,\int_0^T \|\mu_u\|_{\BL^*_V} \diff u}\,  \| \mu_0 \|_{\BL^*_V}.
$$
Using Lemma \ref{lem:gronwall_exponential_ricatti}, we arrive at \eqref{eq:stability_estimate_main_thm}.
\end{proof}

\appendix 
\section{Gr{\"o}nwall-type inequalities}

\begin{lem}[backward Gr{\"o}nwall's inequality]\label{lem:back_gronwall}
    Suppose that $f, g,h: [0,T]\to \R^+$ such that $h$ is nonincreasing,  $C$ is a nonnegative constant and 
    $$
    f(t) \leq h(t) + C\, \int_t^T g(s) \, f(s) \diff s.
    $$
    Then, $f(t) \leq h(t)\,e^{C\,\int_t^{T} g(u) \diff u}$.
\end{lem}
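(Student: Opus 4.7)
The plan is to reduce the statement to the classical forward Gr{\"o}nwall inequality by reversing time. Set $\tau := T-t$ and define the reversed functions $\tilde f(\tau) := f(T-\tau)$, $\tilde g(\tau) := g(T-\tau)$, $\tilde h(\tau) := h(T-\tau)$. Since $h$ is nonincreasing, $\tilde h$ is nondecreasing. A change of variable $\sigma = T-s$ in the integral turns the hypothesis into
$$
\tilde f(\tau) \leq \tilde h(\tau) + C \int_0^{\tau} \tilde g(\sigma) \, \tilde f(\sigma) \diff \sigma,
$$
which is the standard forward Gr{\"o}nwall inequality with a nondecreasing forcing term, yielding $\tilde f(\tau) \leq \tilde h(\tau)\, e^{C \int_0^{\tau} \tilde g(\sigma) \diff \sigma}$. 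Translating back to the original variable produces exactly $f(t) \leq h(t)\, e^{C \int_t^T g(u) \diff u}$.

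If one prefers a self-contained direct argument, I would introduce $\Phi(t) := C\int_t^T g(s) f(s) \diff s$, so that $\Phi(T)=0$ and $\Phi'(t) = -C g(t) f(t)$. Plugging the hypothesis $f(t) \leq h(t) + \Phi(t)$ gives the differential inequality $\Phi'(t) + C g(t) \Phi(t) \geq -C g(t) h(t)$. Multiplying by the integrating factor $\exp\bigl(C\int_0^t g(u) \diff u\bigr)$ and integrating backward from $t$ to $T$ yields
$$
\Phi(t) \leq \int_t^T C g(s) h(s)\, e^{C \int_t^s g(u) \diff u} \diff s.
$$
Using $h(s) \leq h(t)$ for $s\geq t$, the right-hand side becomes $h(t)\bigl(e^{C\int_t^T g(u) \diff u} - 1\bigr)$, so that $f(t) \leq h(t) + \Phi(t) \leq h(t)\,e^{C\int_t^T g(u)\diff u}$.

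Both routes are routine; the only point that requires care is the use of the monotonicity of $h$ (which is essential to pull $h(t)$ out of the integral in the direct approach, or equivalently to invoke the standard Gr{\"o}nwall statement for nondecreasing forcing after the time reversal). No substantive obstacle arises beyond this bookkeeping.
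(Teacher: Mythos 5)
Your first route (time reversal via $\tau = T-t$, reducing to the standard forward Gr\"onwall inequality with nondecreasing forcing) is exactly the paper's proof. The alternative integrating-factor argument you sketch is also correct, but the paper uses the change-of-variables route, so in essence you have reproduced the paper's argument.
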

\begin{proof}
We change variables $u = T-s$ so that
$$
f(T-(T-t)) \leq h(T-(T-t)) + C \, \int_{0}^{T-t} g(T-u)\, f(T-u) \diff u.
$$
Applying usual Gr{\"o}nwall's inequality to the function $s \mapsto f(T-s)$ (note that the function $s \mapsto h(T-s)$ is nondecreasing) we deduce
$$
f(t) = f(T-(T-t)) \leq h(T-(T-t))\,e^{C\,\int_0^{T-t} g(T-u) \diff u } = h(t)\,e^{C\,\int_t^{T} g(u) \diff u }.
$$
\end{proof}

\begin{lem}\label{lem:gronwall_exponential_ricatti}
Let $y(t):[0,\infty)\to \R^+$ be a continuous function such that 
$$
y(t)\leq \alpha(t)\, e^{C\,\int_0^t y(s) \diff s}
$$
for some $C>0$ and nondecreasing, nonnegative function $\alpha(t)$. Then,
$$
y(t) \leq \frac{\alpha(t)}{1-C\,t\,\alpha(t)}
$$
whenever $1-C\,t\,\alpha(t)>0$.
\end{lem}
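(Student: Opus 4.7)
\textbf{Proof proposal for Lemma \ref{lem:gronwall_exponential_ricatti}.} The plan is to introduce the antiderivative $Y(t) := \int_0^t y(s)\,\diff s$ and turn the hypothesis into a separable differential inequality for $Y$. By definition $Y$ is $C^1$ with $Y'(t) = y(t)$, and the assumption reads
$$
Y'(t) \leq \alpha(t)\, e^{C\,Y(t)},
$$
so multiplying by $e^{-C\,Y(t)}$ gives
$$
\frac{\diff}{\diff t}\!\left(-\tfrac{1}{C}\, e^{-C\,Y(t)} \right) = e^{-C\,Y(t)}\, Y'(t) \leq \alpha(t).
$$

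Integrating this from $0$ to $t$ and using $Y(0)=0$, I obtain
$$
\frac{1}{C}\bigl(1 - e^{-C\,Y(t)}\bigr) \leq \int_0^t \alpha(s)\,\diff s \leq t\,\alpha(t),
$$
where the last inequality uses that $\alpha$ is nondecreasing. Rearranging yields $e^{-C\,Y(t)} \geq 1 - C\,t\,\alpha(t)$. Under the assumption $1 - C\,t\,\alpha(t) > 0$, this is a positivity statement that can be inverted to give
$$
e^{C\,Y(t)} \leq \frac{1}{1 - C\,t\,\alpha(t)}.
$$

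Plugging this back into the original hypothesis $y(t) \leq \alpha(t)\, e^{C\,Y(t)}$ yields the claimed bound
$$
y(t) \leq \frac{\alpha(t)}{1 - C\,t\,\alpha(t)}.
$$
I do not expect any real obstacle here: the argument is just the standard Grönwall trick adapted to the exponential right-hand side. The only subtlety worth flagging is the monotonicity of $\alpha$, which is used precisely once to replace $\int_0^t \alpha(s)\,\diff s$ by the cleaner expression $t\,\alpha(t)$ so that the final bound depends on $\alpha(t)$ rather than on its integral.
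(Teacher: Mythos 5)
Your proof is correct and amounts to the same Gr{\"o}nwall/Ricatti trick as the paper's, just executed slightly more directly: the paper first fixes a horizon $T$, bounds $\alpha(t)\le\alpha(T)$, and integrates the Ricatti inequality $z'\le Cz^2$ for $z(t)=\alpha(T)e^{C\int_0^t y}$, whereas you keep $\alpha(t)$ time-dependent, separate variables via $\frac{\diff}{\diff t}\bigl(-\tfrac1C e^{-CY(t)}\bigr)\le\alpha(t)$, and only invoke the monotonicity of $\alpha$ at the end to replace $\int_0^t\alpha$ by $t\,\alpha(t)$. Both are sound and yield the identical bound; yours avoids the auxiliary variable $z$ and the freeze-at-$T$ step.
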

\begin{proof}
We slightly adapt the proof from \cite[Lemma 3]{MR2448326}. We fix $T>0$ and consider $t\in[0,T]$. Then,
$$
y(t)\leq \alpha(T)\, e^{C\,\int_0^t y(s) \diff s}.
$$
We let $z(t) = \alpha(T)\, e^{C\,\int_0^t y(s) \diff s}$ and we note that 
$
z'(t) = C\, z(t)\, y(t) \leq C\, z(t)^2. 
$
Integrating this differential inequality, we get
$$
z(t) \leq \frac{z(0)}{1-t\,z(0)} = \frac{\alpha(T)}{1-C\,t\,\alpha(T)}.
$$
Taking $t=T$, we conclude the proof.
\end{proof}

\section{Continuity of solutions in time}\label{app:cont_in_time}
\begin{proof}[Proof of Lemma \ref{lem:continuity_in_time}]
From \cite{MR3919409}, we know that the measure solution is the fixed point of the push-forward representation
\begin{equation}\label{eq:push-forward_rho_represent}
\rho_t = X_{0,t}^{\#}\,\rho_0,
\end{equation}
where $X_{0,t}$ is the flow of the related vector field
\begin{equation}\label{eq:cont_sol_flow_ODE}
\partial_t X_{0,t}(x) = -K \ast (\nabla \rho_t + \rho_t \, \nabla V)(X_{0,t}), \qquad X_{0,0}(x) = x.
\end{equation}
Note carefully that since $\nabla \rho_{\infty} + \rho_{\infty}\,\nabla V = 0$, we have $(\nabla \rho_t + \rho_t \, \nabla V) = (\nabla \mu_t + \mu_t \, \nabla V)$ so that the flow map $X_{0,t}$ is exactly the one defined in \eqref{eq:ODE_for_the_flow}. This fact will be relevant in the sequel. \\

Given a test function $\psi \in \BL(\R^d)$ with $\|\psi\|_{\BL(\R^d)}\leq 1$, and times $s, t \in [0,T]$ we compute using \eqref{eq:push-forward_rho_represent} and \eqref{eq:cont_sol_flow_ODE}
\begin{align*}
\int_{\R^d} \psi(x)&\,(1+V(x))\diff (\rho_t - \rho_s)(x) = \\ &=
\int_{\R^d} \left[\psi(X_{0,t}(x))\,(1+V(X_{0,t}(x))) - \psi(X_{0,s}(x))\,(1+V(X_{0,s}(x)))  \right] \diff \rho_0(x)\\
&= \int_{\R^d} \left[\psi(X_{0,t}(x)) - \psi(X_{0,s}(x))\, \right] \, \frac{1+V(X_{0,t}(x))}{1+V(x)}\, (1+V(x)) \diff \rho_0(x) \\
&\phantom{ = } + 
\int_{\R^d} \psi(X_{0,s}(x)) \frac{V(X_{0,t}(x)) - V(X_{0,s}(x))}{1+V(x)}\, (1+V(x)) \diff \rho_0(x) =: I_1 + I_2 .
\end{align*}
For the term $I_1$ we use \eqref{eq:cont_sol_flow_ODE}, Lemma \ref{lem:est_vf_Rd} (to control the vector field) and \eqref{eq:estimate_C(T)_on_finite_time_intervals} (to control the solution on bounded intervals of time)
\begin{align*}
|X_{0,t}(x)-X_{0,s}(x)| &\leq \int_s^t \left\|K \ast (\nabla \rho_u + \rho_u \, \nabla V)\right\|_{L^{\infty}(\R^d)} \diff u \\ &\leq \int_s^t \|\rho_u\|_{\BL^*_V}\, \diff u \leq C(T)\,|t-s|.
\end{align*}
Moreover, by Lemma \ref{lem:quotients_v} and \eqref{eq:estimate_C(T)_on_finite_time_intervals}, 
\begin{equation}\label{eq:quotients_estimates_continuity_proof}
\left|\frac{1+V(X_{0,t}(x))}{1+V(x)}\right|, \left| \frac{\nabla V(X_{0,t}(x))}{1+V(x)} \right| \leq C\, e^{C\, \int_s^t \|\rho_u\|_{\BL^*_V} \diff u} \leq C(T).
\end{equation}
Hence, using 1-Lipschitz continuity of $\psi$ and 
\eqref{eq:quotients_estimates_continuity_proof} we obtain that  
$$
|I_1| \leq C(T)\, |t-s|\,  \int_{\R^d} (1+V(x))\diff \rho_0(x) \leq C(T,\rho_0)\,|t-s|.
$$
For $I_2$, we first estimate
\begin{align*}
\left|\frac{V(X_{0,t}(x)) - V(X_{0,s}(x))}{1+V(x)}\right| & \leq  \int_s^t \left|\frac{\nabla V(X_{0,u})}{1+V(x)}\right| \, \left|K \ast (\nabla \rho_u + \rho_u \, \nabla V)(X_{0,u})\right| \diff u \\ & \leq C(T)\, \int_s^t \|\rho_u\|_{\BL^*_V} \diff u \leq C(T)\, |t-s|, 
\end{align*}
where we used \eqref{eq:quotients_estimates_continuity_proof} and Lemma \ref{lem:est_vf_Rd}. Hence, since $\|\psi\|_{L^{\infty}(\R^d)} \leq 1$, we obtain
$$
|I_2| \leq C(T)\, |t-s|\,  \int_{\R^d} (1+V(x))\diff \rho_0(x) \leq C(T,\rho_0)\,|t-s|.
$$
It follows that 
$$
\left| \int_{\R^d} \psi(x)\,(1+V(x))\diff (\rho_t - \rho_s)(x) \right| \leq C(T,\rho_0)\,|t-s|.
$$
Taking supremum over all $\psi \in \BL(\R^d)$ with $\|\psi\|_{\BL(\R^d)}\leq 1$, we conclude the proof.
\end{proof}

\section{Technical proofs from Section \ref{sect:assumption}}\label{app:proofs_assumptions}

\begin{proof}[Proof of Lemma \ref{lem:example_ass_satisfied}]
Let $g(x,y) = \frac{\nabla V(x)\cdot \nabla V(y)}{1+V(y)}\, K(x-y)$ and $h(x,y) = \frac{\nabla V(x)}{1+V(y)}\, K(x-y)$. If $p > 2$, we see, taking $x=y$, that $g$ is not bounded which proves that $p \in (0,2]$ is a necessary condition.\\

Let $p \in (0,2]$. We need to prove that $g, \nabla_y g, h, \nabla_y h \in L^{\infty}(\R^d \times \R^d)$. We will use the following inequality
\begin{equation}\label{eq:triangle_inequality_nablaV}
|\nabla V(x)| \leq C\,|\nabla V(y)| + C\, |\nabla V(x-y)| + C
\end{equation} 
which is a consequence of \eqref{eq:growth_V_simple}. \\

\underline {Boundedness of $g$.} Using \eqref{eq:triangle_inequality_nablaV} we have
\begin{multline*}
\frac{1}{C} \, |g(x,y)| \leq  \left(\frac{|\nabla V(y)|\, |\nabla V(y)|}{1+V(y)} + \frac{|\nabla V(x-y)|\, |\nabla V(y)|}{1+V(y)} +  \frac{|\nabla V(y)|}{1+V(y)}\right) K(x-y). 
\end{multline*} 
The first and third terms are controlled since $p \leq 2$ while the second uses additionally the control of $|\nabla V|\, K$.\\

\underline{Boundedness of $h$.} We use \eqref{eq:triangle_inequality_nablaV} to get
$$
\frac{1}{C} |h(x,y)| \leq \frac{|\nabla V(y)|}{1+V(y)}\, K(x-y) + \frac{|\nabla V(x-y)|}{1+V(y)}\, K(x-y) + \frac{1}{1+V(y)}\, K(x-y).
$$
To conclude, we use boundedness of $\frac{\nabla V}{1+V}$ (which holds for any $p>0$) and $|\nabla V|\, K$.\\

\underline{Boundedness of $\nabla_y h$.} By a direct computation,
$$
\nabla_y h(x,y) = -\frac{\nabla V(x) \otimes \nabla V(y)}{(1+V(y))^2} \, K(x-y) - \frac{\nabla V(x)\otimes \nabla K(x-y)}{1+V(y)} =: R_1 + R_2.
$$
Using \eqref{eq:triangle_inequality_nablaV} we get
\begin{multline*}
\frac{\left| R_1 \right|}{C} \leq \left|\frac{\nabla V(y) \otimes \nabla V(y)}{(1+V(y))^2} \, K(x-y) \right| +\\ + \left|\frac{\nabla V(x-y) \otimes \nabla V(y)}{(1+V(y))^2} \, K(x-y)\right| +\frac{|\nabla V(y)|}{(1+V(y))^2} \, K(x-y),
\end{multline*}
$$
\frac{\left| R_2 \right|}{C} \leq \left|\frac{\nabla V(y)\otimes \nabla K(x-y)}{1+V(y)}\right|  +  \left|\frac{\nabla V(x-y)\otimes \nabla K(x-y)}{1+V(y)}\right| + \frac{|\nabla K(x-y)|}{1+V(y)} . 
$$
All the terms above are bounded because $\frac{\nabla V}{1+V}$, $|\nabla V| \,K$ and $|\nabla V| \,|\nabla K|$ are bounded.\\

\underline {Boundedness of $\nabla_y g$.} By a direct computation
\begin{multline*}
\nabla_y g(x,y) = \frac{\nabla V(x)\cdot \nabla^2 V(y)}{1+V(y)}\, K(x-y)- \frac{\nabla V(x)\cdot \nabla V(y)\, \nabla V(y)}{(1+V(y))^2}\, K(x-y)  \\ -
\frac{\nabla V(x)\cdot \nabla V(y)}{1+V(y)}\, \nabla K(x-y) =: P_1 + P_2 + P_3.
\end{multline*}
Concerning the term $P_1$, we notice that since $p \leq 2$, $|\nabla^2 V| \leq C$ so that $P_1$ can be estimated by $\frac{|\nabla V(x)|}{1+V(y)}\, K(x-y) = |h(x,y)|$ which was proved to be bounded above.

Concerning the term $P_2$, we use \eqref{eq:triangle_inequality_nablaV} to get
$$
\frac{|P_2|}{C}\leq \|K\|_{L^{\infty}(\R^d)}\, \left\|\frac{|\nabla V|^3}{(1+V)^2}\right\|_{L^{\infty}(\R^d)}
+ \left\| \frac{|\nabla V|^2}{(1+V)^2}\right\|_{L^{\infty}(\R^d)} \, \left( \left\| \nabla V\, K \right\|_{L^{\infty}(\R^d)} + \left\|  K \right\|_{L^{\infty}(\R^d)} \right).
$$
By the growth conditions \eqref{eq:growth_V_simple} and $p\leq 2$, $\left\|\frac{|\nabla V|^3}{(1+V)^2}\right\|_{L^{\infty}(\R^d)}$ is finite and so, $P_2$ is bounded.

Concerning the term $P_3$, we argue as in $P_2$ to get
$$
\frac{|P_3|}{C} \leq \|\nabla K\|_{L^{\infty}(\R^d)}\, \left\|\frac{|\nabla V|^2}{1+V}\right\|_{L^{\infty}(\R^d)}
+ \left\| \frac{|\nabla V|}{1+V}\right\|_{L^{\infty}(\R^d)} \, \left( \left\| \nabla V\, \nabla K \right\|_{L^{\infty}(\R^d)} + \left\| \nabla K \right\|_{L^{\infty}(\R^d)}\right).
$$
The term $\left\|\frac{|\nabla V|^2}{1+V}\right\|_{L^{\infty}(\R^d)}$ is bounded because $p \leq 2$ and all the other terms are bounded by assumption. The proof is concluded.
\end{proof}

\begin{proof}[Proof of Lemma \ref{lem:est_vf_Rd}]
Concerning \eqref{eq:estimate_vf_whole_space_1}, we only prove the first estimate. The second can be proved in the same way, replacing $K$ with $\nabla K$. We need to study two terms $\nabla K \ast \mu$ and $K \ast (\mu\,\nabla V)$. For the first one,
\begin{multline*}
\|\nabla K \ast \mu \|_{L^{\infty}(\R^d)} = \sup_{x \in \R^d} \left|\int_{\R^d} \frac{\nabla K(x-y)}{1+V(y)}\, (1+V(y))  \diff \mu(y)\right| \leq \\
\leq 
\sup_{x\in\R^d} \left\| \frac{\nabla K(x-\cdot)}{1+V(\cdot)} \right\|_{\BL} \, \| \mu\|_{\BL^*_V} \leq \sup_{x\in\R^d} \|\nabla K(x-\cdot)\|_{\BL} \, \left\| \frac{1}{1+V} \right\|_{\BL} \, \| \mu\|_{\BL^*_V},
\end{multline*}
where $ \frac{1}{1+V} \in \BL(\R^d)$ thanks to the growth condition \eqref{eq:growth_V_simple}. For the second one, we write  
\begin{align*}
\|K \ast (\mu\,\nabla V)& \|_{L^{\infty}(\R^d)} = 
\sup_{x\in\R^d} \left| \int_{\R^d} K(x-y) \, \frac{\nabla V(y)}{1+V(y)}\, (1+V(y)) \diff \mu(y)\right| \leq \\ &
\leq 
\sup_{x\in\R^d} \left\| K(x-\cdot) \, \frac{\nabla V(\cdot)}{1+V(\cdot)} \right\|_{\BL}
\leq 
\sup_{x\in\R^d} \|K(x-\cdot)\|_{\BL}\, \left\| \frac{\nabla V}{1+V} \right\|_{\BL} \, \| \mu \|_{\BL^*_V} \\
&\leq \sup_{x\in\R^d} \|K\|_{\BL}\, \left\| \frac{\nabla V}{1+V} \right\|_{\BL} \, \| \mu \|_{\BL^*_V},
\end{align*}
where $\frac{\nabla V}{1+V} \in \BL(\R^d)$ due to the growth condition \eqref{eq:growth_V_simple}. We proceed to the proof of \eqref{eq:estimate_vf_whole_space_2} which requires condition \eqref{eq:add_cond_growth_at_infinity}. As before, we write 
\begin{multline*}
\left|\nabla V(x) \cdot K \ast \nabla \mu(x)\right| = \left|\int_{\R^d} \nabla V(x)\cdot \nabla K(x-y)\,  \diff \mu(y) \right| \leq \\ \leq \sup_{x\in\R^d} \left\|  \frac{\nabla V(x)}{1+V(\cdot)}\cdot \nabla K(x-\cdot)\right\|_{\BL} \, \|\mu\|_{\BL^*_V}.  
\end{multline*}
Finally,  we conclude
\begin{multline*}
\left| \nabla V(x) \cdot K \ast (\mu \, \nabla V)(x) \right| = \left| \int_{\R^d} \nabla V(x) \cdot \nabla V(y)\, K(x-y)\diff \mu(y) \right| \leq \\
\leq \sup_{x\in\R^d} \left\| \frac{\nabla V(x) \cdot \nabla V(\cdot)}{1+V(\cdot)}\, K(x-\cdot) \right\|_{\BL} \, \|\mu \|_{\BL^*_V}.  
\end{multline*}
\end{proof}

\subsection*{Acknowledgements}
JAC and JS were supported by the Advanced Grant Nonlocal-CPD (Nonlocal PDEs for Complex Particle Dynamics: Phase Transitions, Patterns and Synchronization) of the European Research Council Executive Agency (ERC) under the European Union’s Horizon 2020 research and innovation programme (grant agreement No. 883363). JAC was also partially supported by the EPSRC grant numbers EP/T022132/1 and EP/V051121/1.

\bibliographystyle{abbrv}
\bibliography{fastlimit}
\end{document}